\newtheorem{theorem}{Theorem}[section]
\newtheorem{corollary}[theorem]{Corollary}
\newtheorem{proposition}[theorem]{Proposition}
\theoremstyle{definition}
\newtheorem{remark}[theorem]{Remark}
\numberwithin{equation}{section}
\theoremstyle{plain}    
\numberwithin{equation}{section} %% Comment out for sequentially-numbered
\numberwithin{figure}{section} %% Comment out for sequentially-numbered
\theoremstyle{plain}    
\theoremstyle{plain}    
\theoremstyle{remark}    
\newtheorem*{acknowledgement*}{Acknowledgement} 
\newcommand{\cB}{{\mathcal B}}
\newcommand{\cF}{{\mathcal F}}
\newcommand{\cP}{{\mathcal P}}
\newcommand{\cS}{{\mathcal S}}
\newcommand{\cT}{{\mathcal T}}
\newcommand{\cU}{{\mathcal U}}
\newcommand{\Om}{{\Omega}}
\newcommand{\om}{{\omega}}
\newcommand{\ve}{{\varepsilon}}
\newcommand{\del}{{\delta}}
\newcommand{\gam}{{\gamma}}
\newcommand{\Gam}{{\Gamma}}
\newcommand{\vf}{{\varphi}}
\newcommand{\sig}{{\sigma}}
\newcommand{\al}{{\alpha}}
\newcommand{\be}{{\beta}}
\newcommand{\ka}{{\kappa}}
\newcommand{\la}{{\lambda}}
\newcommand{\La}{{\Lambda}}
\newcommand{\Up}{{\Upsilon}}
\newcommand{\up}{{\upsilon}}
\newcommand{\vs}{{\varsigma}}
\newcommand{\bbC}{{\mathbb C}}
\newcommand{\bbN}{{\mathbb N}}
\newcommand{\bbR}{{\mathbb R}}
\newcommand{\bbZ}{{\mathbb Z}}
\newcommand{\gP}{{\mathfrak P}}
\newcommand{\SRB}{{\mu^{\mbox{\tiny{SRB}}}}}
\begin{document}
\title[]{Nonconventional large deviations theorems}%
\vskip 0.1cm 
\author{ Yuri Kifer
\quad\quad and\quad\quad\quad\quad S.R.S. Varadhan\\
\vskip 0.1cm
Institute of Mathematics\quad\quad\quad Courant Institute\\
The Hebrew University\quad\quad\quad New York University\\
Jerusalem, Israel\quad\quad\quad\quad\quad\quad New York, USA}%
\address{
Institute of Mathematics, The Hebrew University, Jerusalem 91904, Israel}
\email{ kifer@math.huji.ac.il}
\address{
Courant Institute for Mathematical Studies, New York University, 
251 Mercer St, New York, NY 10012, USA}% 
\email{ varadhan@cims.nyu.edu}%

\thanks{Yu. Kifer was supported by ISF grants 130/06 and 82/10 and S.R.S. 
Varadhan was supported by NSF grants  OISE 0730136 and DMS 0904701 }
\subjclass[2000]{Primary: 60F10 Secondary: 60J05, 60J25, 37D20}%
\keywords{large deviations, Markov processes, nonconventional averages,
hyperbolic diffeomorphisms.}%
\dedicatory{  }
\date{\today}
\begin{abstract}\noindent
We obtain large deviations theorems  for both discrete time expressions of the
form $\sum_{n=1}^NF\big(X(q_1(n)),\ldots,X(q_\ell(n))\big)$ and similar  
expressions of the form $\int_0^TF\big( X(q_1(t)),\ldots, X(q_\ell(t))\big)dt$
in continuous time. Here $X(n),n\geq 0$ or $X(t),  t\ge 0$ is a Markov process
satisfying Doeblin's condition, $F$ is a bounded continuous function 
 and $q_i(n)=in$ for $i\le k$
while for $i>k$ they are positive functions taking on integer values on
integers with some growth conditions which are satisfied, for instance, when
$q_i$'s are polynomials of increasing degrees. Applications to some types of
dynamical systems such as mixing subshifts of finite type and hyperbolic and
expanding transformations will be obtained, as well.

\end{abstract}
%\footnotetext[1]{} 
\maketitle
\markboth{Yu.Kifer and S.R.S.Varadhan}{Nonconventional large deviations} 
\renewcommand{\theequation}{\arabic{section}.\arabic{equation}}
\pagenumbering{arabic}

\renewcommand{\theequation}{\arabic{section}.\arabic{equation}}
\pagenumbering{arabic}

\section{Introduction}\label{sec1}\setcounter{equation}{0}

Nonconventional ergodic theorems which attracted substantial attention 
in ergodic theory (see, for instance, \cite{Be} and \cite{Fu}) 
studied the limits of expressions having the form
$1/N\sum_{n=1}^NT^{q_1(n)}f_1\cdots
 T^{q_\ell (n)}f_\ell$ where $T$ is a
weakly mixing measure preserving transformation, $f_i$'s are bounded 
measurable functions and $q_i$'s are polynomials taking on integer values on
the integers. While, for instance, \cite{Be} and \cite{Fu} were interested
in $L^2$ convergence, other papers such as \cite{As} provided conditions
for almost sure convergence in such ergodic theorems.
Originally, these results were motivated by applications to 
multiple recurrence for dynamical systems taking functions $f_i$ being 
indicators of some measurable sets. 

Introducing stronger mixing or weak dependence conditions enabled us in
 \cite{KV} to obtain functional central limit theorems for even more general
  expressions of the form
\begin{equation}\label{1.1}
\frac 1{\sqrt N}\sum_{n=1}^{[Nt]}\big( F(X(q_1(n)),...,X(q_\ell(n))-\bar F\big)
\end{equation}
where $X(n),\, n\geq 0$ is a sufficiently fast mixing vector valued process
with some moment conditions and stationarity properties, $F$ is a locally 
H\" older continuous function with polinomial growth, $\bar F=\int Fd(\mu\times
\cdots\times\mu)$ and $\mu$ is the distribution of $X(0)$. In order to ensure
existence of limiting variances and covariances we had to impose certain 
assumptions concerning the functions $q_j(n),\, j\geq 1$ saying that there
exists an integer $k\geq 1$ such that $q_j(n)=jn$
for $j=1,...,k$ while $q_j(n),\, j\geq k$ are positive functions taking on
integer values on integers with some (faster than linear) growth conditions.

The next natural step in the study of limiting behavior of nonconventional
sums $S_N=\sum_{n=1}^NF\big(X(q_1(n)),...,X(q_\ell(n))\big)$ is to obtain 
large deviations estimates. Namely, we will be interested in this paper
in the asymptotical behavior as $N\to\infty$ of probabilities
\begin{equation}\label{1.2}
P\{\frac 1NS_N\in\Gam\}
\end{equation}
for various (open or closed) sets $\Gam\subset\bbR$. According to \cite{Ki4}
 under appropriate
conditions $\frac 1NS_N$ converges with probability one as $N\to\infty$ to
$\bar F=\int Fd\mu\times\cdots\times\mu$ where $\mu$ is the common 
distribution of $X(n)$'s. Thus, as usual, (\ref{1.2}) describes deviations
of $\frac 1NS_N$ from the limit in the law of large numbers.

The study of asymptotics of probabilities in (\ref{1.2}) leads to what is
usually called the first level of large deviations. We will study also second
level large deviations estimates which means in our setup to consider 
occupational measures 
\begin{equation}\label{1.3}
\zeta_N=\frac 1N\sum_{n=1}^N\del_{\big(X(q_1(n)),...,X(q_\ell(n))\big)}
\end{equation}
and to study the asymptotical behavior as $N\to\infty$ of probabilities
$P\{\zeta_N\in\cU\}$
where $\cU$ is a subset in the space of probability measures on a corresponding
product space. In addition, we will consider also large deviations in the 
averaging setup, namely, for the "slow" variable $\Xi^\ve(n)=\Xi_x^\ve(n)$ 
given by a difference equation of the form
\begin{equation}\label{1.4}
\Xi^\ve(n+1)=\Xi^\ve(n)+\ve F\big(\Xi^\ve(n),X(q_1(n)),...,X(q_\ell(n))\big),
\, n=0,1,...,\,\Xi_x^\ve(0)=x
\end{equation}
which is actually a generalization of the above since if 
$F(\xi,x_1,...,x_\ell)$ does not depend on $\xi$ then $\Xi^{\frac 1N}(N)=
\frac 1NS_N$. We will deal also with continuous time versions of the above
results considering $S_T=\int_0^TF\big(X(q_1(t)),...,X(q_\ell(t))\big)dt$
for some stochastic process $X(s),\, s\geq 0$.

As for conventional sums ($\ell=k=1$) meaningful large deviations estimates
can be obtained only for some specific classes of stochastic processes 
and dynamical systems. In our more general situation we also assume that in
the probabilistic setup $X(n),\,
n=0,1,...$ is a Markov chain satisfying a (strong) Doeblin condition
while in the dynamical systems setup we can consider $X(n)=X(n,\om)=f(T^n\om)$
where $T$ is either a mixing subshift of finite type or a hyperbolic 
diffeomorphism or an expanding transformation and $f$ is a H\" older continuous
(vector) function. In the continuous time
case we take the underlying process $X(t)$ to be in the probabilistic 
setup either an irreducible finite Markov chain with continuous time
or a nondegenerate diffusion on a compact manifold while in the dynamical
systems setup we can take $X(t)=X(t,\om)=f(T^t\om)$ where $T^t,\, t\geq 0$ is
a hyperbolic flow on a compact manifold and $f$ is a H\" older continuous
(vector) function. 

We will show that it is not difficult to reduce the
problem to the case $k=\ell$ and the major problems arise only in dealing with
random variables $X(n), X(2n),...,X(kn)$. When $k=1$ the above reduction leads
to the standard (conventional) setup of large deviations. When $k>1$ then the
general case of Markov sequences requires a quite elaborate technique and a
lengthy proof and it will be treated in another paper while here when $k>1$
we restrict ourselves to independent identically distributed (i.i.d.)
sequences $X(n), n\geq 0$ which, unlike in the conventional setup, is still
nontrivial.  

Both probabilistic and dynamical
systems setups are united by common ideas and motivations but their
machineris are quite different and by this reason most of this paper deals 
with the probabilistic setup and only in the last Section \ref{sec5} we discuss
some of dynamical systems results which especially can benefit readers familiar
with this field.

\section{Preliminaries and main results}\label{sec2}\setcounter{equation}{0}

 We start with the probabilistic discrete
time setup where the underlying process $X(0),\, X(1),\, X(2),...$ is
a Markov chain defined on a probability space $(\Om,\cF,P)$ and evolving
on a Polish measurable space $(M,\cB)$ as its phase space. We assume a "strong"
 Doeblin condition saying that for some integer $n_0>0$, a constant
 $C>0$ and a probability measure $\nu$ on $M$ the $n_0$-step transition
 probability $P(n_0,x,\cdot)$ of the above Markov chain $X$ satisfies
 \begin{equation}\label{2.1}
 C^{-1}\nu(G)\leq P(n_0,x,G)\leq C\nu(G)
 \end{equation}
 for any $x\in M$ and every measurable set $G\subset M$. It is well known
 (see, for instance, \cite{Do}) that (\ref{2.1}) implies existence of 
 a unique invariant measure $\mu$ of the Markov chain $X$ and the equality
 $\mu(G)=\int d\mu(x)P(n,x,G)$ yields that 
 \begin{equation}\label{2.2}
 C^{-1}\leq\frac {d\mu}{d\nu}(x)=p(x)\leq C
 \end{equation}
 where $d\mu/d\nu$ denotes the Radon-Nikodim derivative.
 
 In all cases our setup includes also a bounded measurable function
 $F=F(x_1,x_2,...,x_\ell)$ on the $\ell$-times product space $M^\ell=
 M\times\cdots\times M$. The setup becomes complete with introduction of 
 positive increasing functions $q_j,\, j=1,...,\ell$ taking on integer values 
 on integers and such that 
 \begin{equation}\label{2.3}
 q_j(n)=jn\quad\mbox{for}\quad j=1,...,k\quad\mbox{and some}\quad k\leq\ell
 \end{equation}
 while for $j=k+1,...,\ell$ and any $\gam >0$,
 \begin{equation}\label{2.4}
 \lim_{n\to\infty}(q_j(n)-q_j(n-1))=\infty\,\,\,\mbox{and}\,\,\, 
 \liminf_{n\to\infty}(q_j(\gam n)- q_{j-1}(n))>0.
 \end{equation}
 For any function $W$ on $M^\ell$ we denote by $\hat W$ the function on $M$
 defined by
 \begin{equation}\label{2.5}
 \hat W(x)=\int\exp(W(x,x_2,...,x_\ell))d\mu(x_2)...d\mu(x_\ell).
 \end{equation}
 As usual we denote by $P_x$ the probability conditioned to $X(0)=x$ and
 by $E_x$ the corresponding expectation. Now, we can formulate our first
 result.
 
 \begin{theorem}\label{thm2.1}
 Let $W_\la(x_1,...,x_\ell),\,\la\in(-\infty,\infty)$ be a differentiable 
 in $\la$ family of bounded measurable functions on $M^\ell$ such that
 $dW_\la(x_1,...,x_\ell)/d\la$ is bounded for each $\la$, as well. Assume
 that $k=1$ in (\ref{2.3}) and (\ref{2.4}). Then for any $x\in M$ the limit
 \begin{equation}\label{2.6}
 Q(W_\la)=\lim_{N\to\infty}\frac 1N\ln E_x\exp\big(\sum_{n=1}^N
 W_\la(X(q_1(n)),...,X(q_\ell(n)))\big)
 \end{equation}
 exists, it is independent of $x$ and it is differentiable in $\la$. In fact,
 $Q(W_\la)=\ln r(W_\la)$ where $r(W)$ is the spectral radius of the positive
 operator $R(W)$ acting by 
 \begin{equation}\label{2.7}
 R(W)g(x)=\int P(x,dy)g(y)\hat W(y).
 \end{equation}
 
 Furthermore, set $W_\la(x_1,...,x_\ell)=\la F(x_1,...,x_\ell)$ and
 \begin{equation}\label{2.8}
 J(u)=\sup_\la(\la u-r(W_\la)),\, u\in\bbR.
 \end{equation}
 Then for any closed set $K\subset\bbR$,
 \begin{equation}\label{2.9}
 \limsup_{N\to\infty}\frac 1N\ln P\{\frac 1NS_N\in K\}\leq -\inf_{u\in K}
 J(u)
 \end{equation}
 and for any open set $U\subset\bbR$,
 \begin{equation}\label{2.10}
 \liminf_{N\to\infty}\frac 1N\ln P\{\frac 1NS_N\in U\}\geq -\inf_{u\in U}
 J(u)
 \end{equation}
 where, as before, $S_N=S_N(F)=\sum_{n=1}^NF\big(X(q_1(n),...,
 X(q_\ell(n))\big)$.
 \end{theorem}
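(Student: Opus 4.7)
The plan is to establish the cumulant generating function limit (\ref{2.6}) via a Feynman--Kac/Perron--Frobenius reduction, and then deduce the large deviations bounds (\ref{2.9})--(\ref{2.10}) by the G\"artner--Ellis theorem.

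For (\ref{2.6}) the key observation is that by iterating the Markov property,
\[
R(W)^N\mathbf{1}(x)=E_x\prod_{n=1}^N\hat W(X(n)).
\]
Under (\ref{2.1})--(\ref{2.2}) the iterate $R(W)^{n_0}$ is a positive integral operator whose kernel is uniformly comparable to $\hat W\,d\nu$, so Perron--Frobenius (for instance via Birkhoff's projective-contraction argument) gives a simple dominant eigenvalue $r(W)>0$ with a strictly positive eigenfunction and a spectral gap; in particular $\tfrac1N\ln R(W)^N\mathbf{1}(x)\to\ln r(W)$ uniformly in $x$. The technical core of (\ref{2.6}) is then to show
\[
E_x\exp\Big(\sum_{n=1}^N W(X(n),X(q_2(n)),\ldots,X(q_\ell(n)))\Big)=e^{o(N)}\,R(W)^N\mathbf{1}(x).
\]
Because $k=1$, only $q_1(n)=n$ participates in the ``main'' chain indices; by (\ref{2.4}) the remaining indices $q_j(n)$, $j\ge 2$, are increasingly separated from $n$ and from one another. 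Doeblin's condition yields exponential mixing $\|P(t,x,\cdot)-\mu\|\le C\rho^{t}$, so one can successively integrate out $X(q_\ell(n)),X(q_{\ell-1}(n)),\ldots,X(q_2(n))$ against $d\mu$, each replacement costing a multiplicative factor $1+O(\rho^{g})$ where $g$ is the current gap to the nearest remaining involved index. The growth conditions in (\ref{2.4}) force all such gaps to tend to infinity, so the accumulated log-error is $o(N)$, giving the displayed asymptotic and hence $Q(W)=\ln r(W)$.

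Differentiability of $Q(W_\la)=\ln r(W_\la)$ follows from the fact that $\la\mapsto R(W_\la)$ depends analytically on $\la$ in operator norm (since $W_\la$ is smooth in $\la$ with uniformly bounded derivative) and $r(W_\la)$ is a simple isolated eigenvalue with spectral gap uniform on compact $\la$-sets; Kato's analytic perturbation theory then yields $\la\mapsto\ln r(W_\la)\in C^1(\bbR)$. Setting $\Lambda(\la)=Q(\la F)=\ln r(W_\la)$, differentiability on all of $\bbR$ together with convexity (inherited from $\Lambda$ being a log moment generating function limit) makes every attainable mean an exposed point of $J=\Lambda^*$, and the G\"artner--Ellis theorem delivers the upper bound (\ref{2.9}) for closed sets and the lower bound (\ref{2.10}) for open sets.

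The main obstacle is the error accounting for the Doeblin replacements: there are $(\ell-1)N$ successive integrations and keeping the accumulated multiplicative error subexponential requires exploiting (\ref{2.4}) twice---once to ensure each side index $q_j(n)$ is far from $q_{j-1}(n)$ (so each single replacement sees a large mixing gap), and once to ensure that $q_j(n)$ and $q_j(n+1)$ diverge (so that conditioning on previously integrated indices does not spoil mixing for the current one). Organizing the replacements in the right order---largest index first, sweeping through $j=\ell,\ell-1,\ldots,2$ and across $n$---while carefully tracking the residual dependence on the main trajectory $X(0),X(1),\ldots,X(N)$ which must in the end be reconstituted as $R(W_\la)^N\mathbf{1}(x)$, is the delicate part of the argument.
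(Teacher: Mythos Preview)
Your approach is correct and essentially the same as the paper's: the paper packages the successive mixing replacements as a stand-alone Proposition~3.1 (peeling off $q_\ell,\ldots,q_{k+1}$ via conditioning, with a $\gamma N$-cutoff of the initial terms to force all remaining gaps uniformly large), specializes to $k=1$ to reach the conventional limit $\tfrac1N\ln E_x\exp\sum_{n=1}^N\ln\hat W_\la(X(n))\to\ln r(W_\la)$, and then cites standard positive-operator perturbation theory and G\"artner--Ellis/Donsker--Varadhan for differentiability and the large deviations bounds. The only cosmetic difference is that the reduction step is formulated in a $\psi$-mixing framework (so it can be reused for the dynamical-systems setting of Section~5) rather than directly via the Doeblin transition-density bound you invoke.
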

 
 We observe that a very particular case of Theorem \ref{thm2.1} when $\{ X(n),
 \, n\geq 0\}$ are i.i.d. random variables was considered in Section 6 of
 \cite{Ki4-}.
 Next, we describe the second level of large deviations in the nonconventional
 setup which deals with occupational measures $\zeta_N$ on $M^\ell$ given by
 (\ref{1.3}) where $M$ is assumed to be a compact space and $\del_z$ is the
 unit mass concentrated at $z$. For any probability measure $\eta$ on $M^\ell$
 define 
 \begin{equation}\label{2.11}
 I(\eta)=-\inf_{u\in\bbC_+(M^\ell)}\int_{M^\ell}\ln\frac {E_{x_1}\int
 u(X(1),x_2,...,x_\ell)d\mu(x_2)...d\mu(x_\ell)}{u(x_1,...,x_\ell)}d\eta
 (x_1,...,x_\ell)
 \end{equation}
 where $\bbC_+(\cdot)$ denotes the space of all positive continuous functions
 on a space in brackets.
 
 \begin{theorem}\label{thm2.2} Let $k=1$ in (\ref{2.3}) and (\ref{2.4}).
 Then for any continuous function $W=W(x_1,...,x_\ell)$ on $M^\ell$ the
 limit 
 \begin{equation}\label{2.12}
 Q(W)=\lim_{N\to\infty}\frac 1N\ln E_x\exp\big(\sum_{n=1}^N
 W(X(q_1(n)),...,X(q_\ell(n)))\big)
 \end{equation}
is a convex lower semicontinuous functional satisfying
\begin{equation}\label{2.13}
Q(W)=\sup_{\eta\in\cP(M^\ell)}\big(\int W(x_1,...,x_\ell)d\eta(x_1,...,x_\ell)-
I(\eta)\big) 
\end{equation}
where $\cP(\cdot)$ is the space of probability measures on a space in brackets
considered with the topology of weak convergence.

Furthermore, for any closed set $K\subset\cP(M^\ell)$,
\begin{equation}\label{2.14}
\limsup_{N\to\infty}\frac 1N\ln P\{\zeta_N\in K\}\leq -\inf_{\eta\in K}I(\eta)
\end{equation}
and for any open set $U\subset\cP(M^\ell)$,
\begin{equation}\label{2.15}
\liminf_{N\to\infty}\frac 1N\ln P\{\zeta_N\in U\}\geq -\inf_{\eta\in U}I(\eta).
\end{equation}
\end{theorem}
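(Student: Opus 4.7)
The plan is to derive Theorem~\ref{thm2.2} from Theorem~\ref{thm2.1} by the Donsker--Varadhan level-2 machinery, adapted to the nonconventional setting. The spectral representation $Q(W)=\ln r(W)$ from Theorem~\ref{thm2.1}, together with the bound $|Q(W_1)-Q(W_2)|\le\|W_1-W_2\|_\infty$ immediate from (\ref{2.12}), extends $Q$ continuously from the differentiable $\la$-families of Theorem~\ref{thm2.1} to all of $C(M^\ell)$ and in particular yields lower semicontinuity. Convexity of $Q$ follows from H\"older inside the exponential in (\ref{2.12}). The Fenchel--Moreau theorem then produces $Q(W)=\sup_\eta(\int W\,d\eta-I^*(\eta))$ with $I^*(\eta):=\sup_W(\int W\,d\eta-Q(W))$, and it remains to identify $I^*$ with the functional $I$ of (\ref{2.11}).

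For this identification, abbreviate $Su(x_1):=E_{x_1}\!\int u(X(1),x_2,\ldots,x_\ell)\,d\mu(x_2)\cdots d\mu(x_\ell)=Pu_\mu(x_1)$ where $u_\mu(y):=\int u(y,x_2,\ldots,x_\ell)\,d\mu(x_2)\cdots d\mu(x_\ell)$, so that $I(\eta)=\sup_{u>0}\int\ln(u/Su)\,d\eta$. For $I^*\ge I$, given $u>0$ put $W:=\ln u-\ln Su$; a direct computation gives $\hat W(y)=u_\mu(y)/Pu_\mu(y)$, so $R(W)(Pu_\mu)=Pu_\mu$ and Perron--Frobenius (valid under Doeblin) forces $r(W)=1$, whence $\int W\,d\eta-Q(W)=\int\ln(u/Su)\,d\eta$. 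For the reverse inequality, let $h$ be the positive eigenfunction $R(W)h=r(W)h$ and set $u(x_1,\ldots,x_\ell):=h(x_1)\exp W(x_1,\ldots,x_\ell)$; then $Su(x_1)=R(W)h(x_1)=r(W)h(x_1)$, so $\ln(u/Su)=W-\ln r(W)$ and $I(\eta)\ge\int W\,d\eta-Q(W)$. This completes (\ref{2.13}).

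The upper bound (\ref{2.14}) now follows by exponential Chebyshev: for each $W\in C(M^\ell)$,
\begin{equation*}
\tfrac{1}{N}\ln P\{\zeta_N\in K\}\le -\inf_{\eta\in K}\!\int W\,d\eta+\tfrac{1}{N}\ln E_x\exp\Big(\sum_{n=1}^N W(X(q_1(n)),\ldots,X(q_\ell(n)))\Big),
\end{equation*}
and letting $N\to\infty$ and optimizing locally in $W$ on a finite subcover of the weak-compact set $K$ converts this into $-\inf_{\eta\in K}I(\eta)$. For the lower bound (\ref{2.15}), given $\eta\in U$ with $I(\eta)<\infty$ pick $W^*\in C(M^\ell)$ with $\int W^*\,d\eta-Q(W^*)\ge I(\eta)-\delta$ and let $h^*$ be the positive eigenfunction of $R(W^*)$. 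Introduce the tilt
\begin{equation*}
\frac{d\bfP^*_N}{dP}=\frac{\exp\Big(\sum_{n=1}^N W^*(X(q_1(n)),\ldots,X(q_\ell(n)))\Big)}{E_x\exp\Big(\sum_{n=1}^N W^*(X(q_1(n)),\ldots,X(q_\ell(n)))\Big)}.
\end{equation*}
Under $\bfP^*_N$ the first-coordinate chain $(X(n))$ is asymptotically Markov with a new Doeblin kernel built from $P$, $h^*$ and $\hat{W^*}$, while the fast samples $X(q_j(n))$, $j\ge 2$, still equilibrate to $\mu$ by (\ref{2.4}); the nonconventional law of large numbers of \cite{Ki4} applied to this tilted dynamics then forces $\zeta_N$ to concentrate weakly at the $\eta$ realising equality in the Legendre pair. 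Unwinding the Radon--Nikodym density yields $\liminf\tfrac{1}{N}\ln P\{\zeta_N\in U\}\ge -I(\eta)-\delta$, and letting $\delta\downarrow0$ finishes the proof.

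The main obstacle I foresee is this last change-of-measure step: the fast samples $X(q_j(n))$ with $j\ge 2$ make the tilted process non-Markovian on $M^\ell$, and one must combine the mixing of the tilted first-coordinate chain with the asymptotic independence of the later coordinates (exactly what produces the product form $\hat W=\int\exp W\,d\mu^{\ell-1}$ in (\ref{2.5}) used in Theorem~\ref{thm2.1}) in order to pin down the joint limit of $\zeta_N$ under the tilt as the prescribed $\eta$.
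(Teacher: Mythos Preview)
Your identification $I^*=I$ via eigenfunctions of $R(W)$ is correct and in fact more explicit than the paper, which simply invokes the Donsker--Varadhan formula; the upper bound by exponential Chebyshev is also fine. The paper, however, bypasses your lower-bound difficulty entirely by a single device you did not think of: it introduces i.i.d.\ sequences $Y^{(2)}(n),\ldots,Y^{(\ell)}(n)$ with common law $\mu$, independent of $X$, and observes that $(X(n),Y^{(2)}(n),\ldots,Y^{(\ell)}(n))$ is a \emph{genuine} Doeblin Markov chain on $M^\ell$. Conditioning on $\cF_X$ collapses its log-moment functional to $\frac1N\ln E_x\exp\big(\sum_n\ln\hat W(X(n))\big)$, which by Proposition~\ref{prop3.1} is exactly the nonconventional $Q(W)$. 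The classical level-2 theory for this auxiliary chain then delivers both the variational formula (the $I$ of (\ref{2.11}) is nothing but the Donsker--Varadhan functional of the product kernel $P(x_1,\cdot)\otimes\mu^{\otimes(\ell-1)}$) and the uniqueness of the maximiser $\nu_W$; since the full LDP on $\cP(M^\ell)$ is determined by $Q$ and this regularity alone, it transfers to $\zeta_N$ without any hand-made tilting.

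Your tilting argument, as written, has two real gaps. First, under your $\bfP^*_N$ the process is not Markov on any space---the density couples $X$ at all the times $q_j(n)$---and the assertion that the first coordinate becomes ``asymptotically Markov'' while the fast samples ``still equilibrate to $\mu$'' is precisely what must be proved, not something one may invoke from \cite{Ki4} (whose hypotheses concern the untilted process). Second, and independently of the first, the tilt by $W^*$ makes $\zeta_N$ concentrate at the unique maximiser $\nu_{W^*}$ of $\nu\mapsto\int W^*\,d\nu-I(\nu)$, not at your given $\eta$; the near-duality $\int W^*\,d\eta-Q(W^*)\ge I(\eta)-\delta$ does not force $\nu_{W^*}$ close to $\eta$, so the unwinding step does not yield $-I(\eta)-\delta$. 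One would have to restrict first to exposed $\eta$ (where an exposing $W^*$ gives $\nu_{W^*}=\eta$) and then argue density of exposed points. The paper's auxiliary-chain trick dissolves both issues by embedding the problem in a setting where the entire Donsker--Varadhan package is already available off the shelf.
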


Next, we exhibit continuous time versions of the above results. Here we
assume that $X(t),\, t\geq 0$ is a Markov process on a Polish measurable space
$(M,\cB)$ such that for some $t_0>0$, a constant $C>0$ and a probability
measure $\nu$ on $M$ the time $t_0$ transition probability $P(t_0,x,\cdot)$
of the above Markov process $X$ satisfies
\begin{equation}\label{2.16}
C^{-1}\nu(G)\leq P(t_0,x,G)\leq C\nu(G)
\end{equation}
for any $x\in M$ and every measurable set $G\subset M$. Again (see \cite{Do}), 
(\ref{2.16}) implies existence of a unique invariant measure $\mu$ of the
Markov process $X$ which satisfies (\ref{2.2}). Now we introduce positive
increasing functions $q_j,\, j=1,...,\ell$ on $\bbR_+$ such that for some
$0<\al_1<\al_2<...<\al_k$ and $k\leq\ell$,
\begin{equation}\label{2.17}
q_j(t)=\al_jt\quad\mbox{for}\quad j=1,...,k
\end{equation}
while for $j=k+1,...,\ell$ and any $\gam>0$,
\begin{equation}\label{2.18}
\lim_{t\to\infty}(q_j(t+\gam)-q_j(t))=\infty\,\,\,{and}\,\,\,
\liminf_{t\to\infty}(q_j(\gam t)-q_{j-1}(t))>0.
\end{equation}
We will be interested in large deviations estimates as $T\to\infty$ for
\[
S_T(F)=S_T=\int_0^TF\big(X(q_1(t)),...,X(q_\ell(t))\big)dt.
\]

\begin{theorem}\label{thm2.3}
 Let $W_\la(x_1,...,x_\ell),\,\la\in(-\infty,\infty)$ be as in 
 Theorem \ref{thm2.1}. Assume that $k=1$ in (\ref{2.17}) and (\ref{2.18}). 
 Then for any $x\in M$ the limit
 \begin{equation}\label{2.19}
 Q_{\mbox{cont}}(W_\la)=\lim_{T\to\infty}\frac 1T\ln E_x\exp\big(\int_0^T
 W_\la(X(q_1(t)),...,X(q_\ell(t)))dt\big)
 \end{equation}
 exists, it is independent of $x$ and it is differentiable in $\la$. In fact,
 $Q_{\mbox{cont}}(W_\la)=\ln r_{\mbox{cont}}(W_\la)$ where $r_{\mbox{cont}}(W)$
 is the spectral radius of the semigroup of positive operators 
 $R^t_{\mbox{cont}}(W)$ acting by the formula
 \begin{equation}\label{2.20}
 R^t_{\mbox{cont}}(W)g(x)=E_x\big(g(X(t))\hat W_{\mbox{cont}}(t)\big)
 \end{equation}
 where
 \begin{equation}\label{2.21}
 \hat W_{\mbox{cont}}(t)=\exp\big(\int_0^tds\int W_\la(X(\al_1 s),x_2,...,
 x_\ell) d\mu(x_2)...d\mu(x_\ell)\big).
 \end{equation}
 
 Furthermore, set $W_\la(x_1,...,x_\ell)=\la F(x_1,...,x_\ell)$ and
 define $J(u)=J_{\mbox{cont}}(u)$ by (\ref{2.8}) with $r_{\mbox{cont}}$
 in place of $r$. Then for any closed set $K\subset\bbR$,
 \begin{equation}\label{2.22}
 \limsup_{T\to\infty}\frac 1T\ln P\{\frac 1TS_T\in K\}\leq -\inf_{u\in K}
 J(u)
 \end{equation}
 and for any open set $U\subset\bbR$,
 \begin{equation}\label{2.23}
 \liminf_{T\to\infty}\frac 1T\ln P\{\frac 1TS_T\in U\}\geq -\inf_{u\in U}
 J(u).
 \end{equation}
 \end{theorem}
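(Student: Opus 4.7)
The plan is to mirror the strategy used in the discrete-time Theorem \ref{thm2.1}, with an initial Riemann-sum discretization to accommodate the continuous time parameter. The first and key task is to reduce the multi-time exponential moment to a Feynman--Kac expression involving only $X(\al_1 t)$. Fix a small $\del>0$ and partition $[0,T]$ into short blocks $I_k=[k\del,(k+1)\del]$, and set $A_k=\int_{I_k}W_\la(X(q_1(t)),\ldots,X(q_\ell(t)))dt$. For $k\del$ large enough, condition (\ref{2.18}) guarantees that the times $q_2(t),\ldots,q_\ell(t)$ with $t\in I_k$ are pairwise separated by more than $t_0$, and each is separated by a quantity tending to infinity with $k$ from the block $I_k$ itself. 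The Doeblin minorization (\ref{2.16}) then permits a block-by-block conditional calculation in which, conditionally on $\cF_{k\del}$, the fast coordinates $\{X(q_j(t)):t\in I_k,\,j\geq 2\}$ are replaced by independent $\mu$-samples. Expanding $\exp(A_k)=1+A_k+O(\del^2)$ using boundedness of $W_\la$ then yields
\[
E\bigl[\exp(A_k)\mid\cF_{k\del}\bigr]=\exp\bigl(\del\,\brW_\la(X(\al_1 k\del))+o(\del)\bigr),\quad \brW_\la(y)=\int W_\la(y,y_2,\ldots,y_\ell)d\mu(y_2)\cdots d\mu(y_\ell),
\]
uniformly for $k$ large, with a logarithmic error per block that is summable over $k$ after discarding an initial window of bounded length. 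Telescoping in $k$, sending $T\to\infty$ and then $\del\to 0$, identifies the limit (\ref{2.19}) with $\lim_{T\to\infty}\tfrac 1T\ln(R^T_{\mbox{cont}}(W_\la)\mathbf 1)(x)$, the semigroup being exactly that of (\ref{2.20})--(\ref{2.21}).

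The remaining steps are standard. Doeblin's condition implies that $R^t_{\mbox{cont}}(W_\la)$ is uniformly positively minorized on $L^\infty(M)$ and hence quasi-compact; the Krein--Rutman theorem then yields a simple isolated leading eigenvalue $r_{\mbox{cont}}(W_\la)^t$ and a strictly positive eigenfunction $h_\la$ bounded above and below. This gives $\tfrac 1T\ln(R^T_{\mbox{cont}}(W_\la)\mathbf 1)(x)\to\ln r_{\mbox{cont}}(W_\la)$ uniformly in $x$, and Kato-type analytic perturbation theory (using the boundedness of $dW_\la/d\la$) provides differentiability in $\la$. Specializing to $W_\la=\la F$, the function $\la\mapsto Q_{\mbox{cont}}(\la F)$ is then convex, finite and differentiable, so the G\"artner--Ellis machinery delivers both bounds: the upper bound (\ref{2.22}) is immediate from Chebyshev and Legendre duality, and the lower bound (\ref{2.23}) is obtained by twisting via the Doob $h$-transform associated with $h_\la$, under which the tilted Markov process is ergodic and satisfies $\tfrac 1T S_T\to u=Q'_{\mbox{cont}}(\la F)$ in probability, after which a standard change-of-measure argument yields the claim.

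The principal difficulty lies in the block-wise averaging step: replacing the multi-time function $W_\la(X(q_1(t)),\ldots,X(q_\ell(t)))$ by the single-time $\brW_\la(X(\al_1 t))$ \emph{inside an exponential integral} with multiplicative error that is uniform in $T$. A pathwise $o(T)$ correction, obtainable for instance from the nonconventional ergodic theorem of \cite{Ki4}, would not survive exponentiation. One must instead exploit Doeblin's minorization block-by-block, combining the exponentially fast mixing of the fast coordinates (which controls the replacement by $\mu$-samples) with the $O(\del^2)$ error coming from Taylor expansion of each $\exp(A_k)$, and handle both the interfaces between consecutive blocks and the initial window on which (\ref{2.18}) has not yet taken effect (a window contributing only $O(1)$ to the log-moment generating function). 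Once this exponential-moment averaging is established, the continuous-time case proceeds exactly as the discrete-time Theorem \ref{thm2.1}.
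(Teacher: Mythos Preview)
Your overall plan matches the paper's: discretize $[0,T]$ into short blocks, Taylor-expand $\exp(A_k)=1+A_k+O(\del^2)$ inside each block, use the mixing coming from Doeblin to average out the fast coordinates, and then invoke standard Feynman--Kac spectral theory and G\"artner--Ellis. The gap is in the ``telescoping in $k$'' step. The blocks $A_k$ are not adapted to any increasing filtration in the physical time of $X$: the random variable $A_{k-1}$ depends on $X(q_\ell(t))$ for $t$ near $k\del$, which lives at time $q_\ell(k\del)\gg\al_1 k\del$, i.e.\ far in the future of the first coordinate $X(\al_1 t')$ for $t'\in I_k$. Hence, if you peel off $A_k$ by conditioning on the smallest $\sig$-algebra that makes $A_1,\ldots,A_{k-1}$ measurable, you have already conditioned on the far future of $X(\al_1\cdot)$, and the conditional expectation of $\exp(A_k)$ is no longer close to $\exp\bigl(\del\,\brW_\la(X(\al_1 k\del))\bigr)$. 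Conditioning instead on $\cF_{k\del}$, as you propose, does not make the preceding blocks measurable at all (they involve $X(q_j(s))$ with $q_j(s)\gg k\del$ for $j\geq 2$), so the product does not telescope. A side issue: $X(\al_1 k\del)$ is $\cF_{k\del}$-measurable only when $\al_1\leq 1$, which is not assumed.

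The paper resolves this by peeling off one \emph{coordinate} at a time rather than one block at a time. For each $m=\ell,\ell-1,\ldots,2$ it introduces interpolating quantities $Z^{(m)}_x(0,n,M)$ in which blocks numbered $\leq n$ still carry the full $m$-coordinate integrand while blocks numbered $>n$ have already had the $m$-th coordinate integrated out against $\mu$. When stripping block $n$ one conditions on $\cF_{q_m(t_n-\gam^2)}$; this works because earlier blocks involve $q_m$ at times $\leq q_m(t_{n-1}+\gam)$, while later blocks now involve only $q_1,\ldots,q_{m-1}$, and (\ref{2.18}) gives $q_{m-1}(T)\leq q_m(\gam T)\leq q_m(t_n-\gam^2)$ for every block in play (here $t_n\geq\gam T$). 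Iterating over $n$ and then over $m$ yields (\ref{3.31}). Your $O(\del^2)$ Taylor bookkeeping is precisely what the paper uses at this point (see (\ref{3.27})), but the conditioning must be done at level $q_m(\cdot)$, not at $k\del$.
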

 
The second level of large deviations in the continuous time nonconventional
setup deals with occupational measures
\begin{equation}\label{2.24}
\zeta_T=\frac 1T\int_0^T\del_{\big(X(q_1(t)),...,X(q_\ell(t))\big)}dt
\end{equation}
on $M^\ell$. Now we assume that $X(t),\, t\geq 0$ is a diffusion process on
a compact Riemannian manifold $M$ with the generator $L$ which is a
nondegenerate second order elliptic differential operator. For any probability
measure $\eta$ on $M^\ell$ set
\begin{equation}\label{2.25}
I_{\mbox{cont}}(\eta)=-\inf_{u\in D_+}\int_M\frac {L_{x_1}u(x_1,x_2,...,x_\ell)
d\mu(x_2)...d\mu(x_\ell)}{u(x_1,x_2,...,x_\ell)}d\eta(x_1,...,x_\ell)
\end{equation}
where the infimum is taken over all positive $u$ from the domain of $L$.

\begin{theorem}\label{thm2.4} Let $k=1$ in (\ref{2.17}) and (\ref{2.18}).
 Then for any continuous function $W=W(x_1,...,x_\ell)$ on $M^\ell$ the limit 
 \begin{equation}\label{2.26}
 Q_{\mbox{cont}}(W)=\lim_{T\to\infty}\frac 1T\ln E_x\exp\big(\int_0^T
 W(X(q_1(t)),...,X(q_\ell(t)))dt\big)=r_{\mbox{cont}}(W) 
 \end{equation}
is a convex lower semicontinuous functional satisfying
\begin{equation}\label{2.27}
Q_{\mbox{cont}}(W)=\sup_{\eta\in\cP(M^\ell)}\big(\int W(x_1,...,x_\ell)
d\eta(x_1,...,x_\ell)-I_{\mbox{cont}}(\eta)\big). 
\end{equation}

Furthermore, for any closed set $K\subset\cP(M^\ell)$,
\begin{equation}\label{2.28}
\limsup_{T\to\infty}\frac 1T\ln P\{\zeta_T\in K\}\leq -\inf_{\eta\in K}
I_{\mbox{cont}}(\eta)
\end{equation}
and for any open set $\subset\cP(M^\ell)$,
\begin{equation}\label{2.29}
\liminf_{T\to\infty}\frac 1T\ln P\{\zeta_T\in U\}\geq -\inf_{\eta\in U}
I_{\mbox{cont}}(\eta).
\end{equation}
\end{theorem}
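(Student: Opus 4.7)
The plan is to follow the blueprint of Theorem~\ref{thm2.2} adapted to the continuous-time diffusion setting. Theorem~\ref{thm2.3} will supply both the existence of $Q_{\mbox{cont}}(W)$ and its identification with a principal eigenvalue on the one-particle manifold $M$. Combining this with the classical Donsker--Varadhan occupation-measure theory for nondegenerate diffusions on compact manifolds yields the variational formula (\ref{2.27}), and the full LDP (\ref{2.28})--(\ref{2.29}) then follows by G\"artner--Ellis / Varadhan duality.

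First, I would apply Theorem~\ref{thm2.3} to the trivial one-parameter family $W_\la\equiv W$ for each $W\in C(M^\ell)$. This gives the limit (\ref{2.26}), its independence of $x$, and the identification with $\ln r_{\mbox{cont}}(W)$. Writing $\bar W(x_1)=\int W(x_1,x_2,\ldots,x_\ell)\,d\mu(x_2)\cdots d\mu(x_\ell)$ for the averaged potential appearing in $\hat W_{\mbox{cont}}$ of (\ref{2.21}), the semigroup $R^t_{\mbox{cont}}(W)$ is (after the harmless rescaling $t\mapsto\al_1 t$) the Feynman--Kac semigroup of the diffusion $X(t)$ with potential $\bar W$, whose generator on $C(M)$ is $L+\bar W$. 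Hence $Q_{\mbox{cont}}(W)$ coincides with the principal eigenvalue $\Lambda(W)$ of $L+\bar W$. Convexity of $W\mapsto Q_{\mbox{cont}}(W)$ is immediate from H\"older applied to the pre-limit, and lower semicontinuity on $C(M^\ell)$ is standard.

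For the variational formula (\ref{2.27}), I would invoke the classical Donsker--Varadhan theorem for the nondegenerate diffusion on $M$:
\[
\Lambda(W)=\sup_{\nu\in\cP(M)}\Big(\int\bar W\,d\nu-I_0(\nu)\Big),\qquad I_0(\nu)=-\inf_{u\in D_+(M)}\int\frac{Lu}{u}\,d\nu.
\]
Since $\int\bar W\,d\nu=\int_{M^\ell}W\,d(\nu\otimes\mu^{\otimes(\ell-1)})$, the supremum is attained on product measures of the form $\eta=\nu\otimes\mu^{\otimes(\ell-1)}$. Then I would verify by direct computation that (\ref{2.25}) gives $I_{\mbox{cont}}(\nu\otimes\mu^{\otimes(\ell-1)})=I_0(\nu)$: the bound $\le$ uses test functions $u(x_1,\ldots,x_\ell)=v(x_1)$, while the bound $\ge$ uses product test functions $u=v\otimes w$ together with Jensen's inequality applied to the convex function $1/w$ against $\mu^{\otimes(\ell-1)}$. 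For $\eta$ not of product form, the additional freedom in $u$ drives the infimum in (\ref{2.25}) to $-\infty$ (so $I_{\mbox{cont}}(\eta)=+\infty$), which matches (\ref{2.27}) to the Donsker--Varadhan sup.

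The LDP (\ref{2.28})--(\ref{2.29}) then follows by the standard G\"artner--Ellis / Varadhan duality. The upper bound comes from exponential Chebyshev, the variational formula (\ref{2.27}), and a finite subcover argument using compactness of $\cP(M^\ell)$ (which holds since $M$ is compact). The lower bound is obtained by tilting the path measure by $\exp\big(\int_0^T W_0(X(q_1(t)),\ldots,X(q_\ell(t)))\,dt-T\,Q_{\mbox{cont}}(W_0)\big)$ for a near-optimizing $W_0$ at a given $\eta_0\in U$ with $I_{\mbox{cont}}(\eta_0)<\infty$, and showing that under the tilted law $\zeta_T\to\eta_0$ in probability. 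The main obstacle is the identification of the Legendre dual of $Q_{\mbox{cont}}$ with the explicit rate function (\ref{2.25}) on the full product space $M^\ell$; this requires exploiting the growth conditions (\ref{2.18}) together with the Doeblin-type mixing (\ref{2.16}) to show that the components $X(q_j(t))$ for $j\ge 2$ decouple from $X(t)$ and from one another as $T\to\infty$, so that at the level of the rate function they may be replaced by independent samples from the invariant measure $\mu$.
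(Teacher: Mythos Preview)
Your approach is correct and closely parallels the paper's, which in Section~\ref{subsec3.3} carries out the reduction~(\ref{3.31}) to the one-particle functional $\bar W$ and then disposes of Theorems~\ref{thm2.3} and~\ref{thm2.4} in a single sentence by appealing to well-known Donsker--Varadhan results ``in the same way as in the discrete time case.'' The discrete-time template (Section~\ref{subsec3.2}) differs from your route in one respect: rather than computing the Legendre dual on $\cP(M^\ell)$ directly, it introduces an auxiliary Markov chain $\big(X(n),Y^{(2)}(n),\ldots,Y^{(\ell)}(n)\big)$ on $M^\ell$ with i.i.d.\ $\mu$-distributed components $Y^{(i)}$ independent of $X$, shows by conditioning on $\cF_X$ (equation~(\ref{3.19})) that this chain has the same $Q$, and then reads off both the variational formula and the full occupation-measure LDP from standard Donsker--Varadhan theory applied to this product chain. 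That device delivers the identification of the explicit rate functional with the Legendre dual without any hand computation of $I_{\mbox{cont}}$ on product versus non-product measures. Your direct dual argument---noting that $Q_{\mbox{cont}}$ factors through the \emph{linear} map $W\mapsto\bar W$, which forces the dual to be $+\infty$ off measures of the form $\nu\otimes\mu^{\otimes(\ell-1)}$---is a legitimate and arguably more transparent alternative in continuous time, where the natural auxiliary process (i.i.d.\ $Y^{(i)}(t)$) is singular.

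One small slip: restricting the infimum in~(\ref{2.25}) to test functions $u(x_1,\ldots,x_\ell)=v(x_1)$ restricts a supremum and therefore yields $I_{\mbox{cont}}(\nu\otimes\mu^{\otimes(\ell-1)})\ge I_0(\nu)$, not $\le$. The opposite inequality---that an arbitrary $u$ on $M^\ell$ cannot beat functions of $x_1$ alone when $\eta$ is of product form---is the direction that actually requires a convexity argument, and your Jensen sketch for it (with the sign of $L\bar u$ uncontrolled) does not quite close; this is precisely the step that the auxiliary-process identification in the paper's approach supplies for free.
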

A similar result holds true when $X(t)$ is a nondegenerate continuous
time Markov chain with a finite state space.

Next, we describe our large deviations estimates in a nonconventional
averaging setup. Here we consider either a difference equation (\ref{1.4})
for $\Xi^\ve(n)$ in the discrete time case where $X(n),\, n\geq 0$ is a
Markov chain satisfying conditions of Theorem \ref{thm2.1} or a differential
equation for $\Xi^\ve(t)=\Xi_x^\ve(t)\in\bbR^d$, $t\geq 0$,
\begin{equation}\label{2.30}
\frac {d\Xi^\ve(t)}{dt}=\ve F\big(\Xi^\ve(t),X(q_1(t)),...,X(q_\ell(t))\big),
\, \Xi_x^\ve(0)=x
\end{equation}
in the continuous time setup where $X(t),\, t\geq 0$ is a Markov process
satisfying conditions of Theorem \ref{2.3}. We assume that $F(\xi,x_1,...,
x_\ell)$ is bounded and Lipschitz continuous in $\xi$. The setup of 
(\ref{2.30}) emerges considering, for instance, a time dependent small
perturbation of the oscillator equation
\begin{equation}\label{2.31}
 \ddot x+\la^2x=\ve g(x,\dot x,t)
\end{equation}
where the force term $g$ depends on time in a random way $g(x,y,t)=g(x,y,
X(q_1(t)),...,X(q_\ell(t)))$. Then passing to the polar coordinates 
$(r,\phi)$ with $x=r\sin(\la(t-\phi))$ and $\dot x=\la r\cos(\la(t-\phi))$ 
the equation (\ref{2.31}) will be transformed into (\ref{2.30}) with $\Xi^\ve
=(r,\phi)$. It seems 
reasonable that a random force may depend on versions of a same process 
 moving with different speeds which is what we have here.
 
As it is well known (see, for instance, \cite{SVM}), if $F(\xi,x_1,...,
x_\ell)$ is bounded and Lipschitz continuous in $\xi$ then whenever for each
$\xi$ the (pointwise) limit
\begin{equation*}
\bar F(\xi)=\lim_{\cT\to\infty}\frac 1\cT\int_0^\cT F(\xi,X(q_1(t)),...,
X(q_\ell(t)))dt
\end{equation*}
exists then for any $T\geq 0$,
\begin{equation*}
\lim_{\ve\to 0}\sup_{0\leq t\leq T/\ve}|\Xi^\ve(t)-\bar \Xi^\ve(t)|=0
\end{equation*}
where
\begin{equation*}
\frac {d\bar \Xi^\ve(t)}{dt}=\ve\bar F(\bar \Xi^\ve(t)).
\end{equation*}
In the discrete time case we have to take
\begin{equation*}
\bar F(\xi)=\lim_{N\to\infty}\frac 1N\sum_{n=0}^N F(\xi,X(q_1(n)),...,
X(q_\ell(n))).
\end{equation*}
 Almost everywhere limits of the averages above can
 be obtained by nonconventional pointwise ergodic theorems from \cite{BLM} 
 and \cite{As}, respectively, in rather general circumstances in the dynamical
 systems case and under another set of conditions existence of such limits 
 follows from \cite{Ki4}. The next natural step here is to obtain large
 deviations estimates for the above approximation of the slow motion $\Xi^\ve$
 by the averaged one $\bar\Xi^\ve$.

For any $\eta\in\cP(M^\ell)$ set 
\begin{equation}\label{2.32}
\bar B_\eta(\xi)=\int B(\xi,x_1,...,x_\ell)d\eta(x_1,...,x_\ell).
\end{equation}
For each absolutely continuous curve $\gam_t,\, t\in[0,\cT]$ set
\begin{equation*}
\cS_{0T}(\gam)=\int_0^\cT\inf\{ I(\eta):\,\dot\gam_t =\bar B_\eta(\gam_t)\}dt
\end{equation*}
where $I(\eta)$ is given by (\ref{2.11}) or $I(\eta)=I_{\mbox{cont}}(\eta)$
given by (\ref{2.25}) in the discrete or continuous time cases, respectively.
If $\gam_t,\, t\in[0,T]$ is not absolutely continuous we set $S_{0T}(\gam)
=\infty$.

\begin{theorem}\label{thm2.5} Let $k=1$ in (\ref{2.3}) and (\ref{2.4}) or in
(\ref{2.17}) and (\ref{2.18}) and set
$\Psi^\ve(t)=\Xi^\ve([t/\ve])$ or $\Psi^\ve(t)=\Xi^\ve(t/\ve)$ in the 
discrete or continuous time cases, respectively. Then for any continuous 
function $W_t(x_1,...,x_\ell)$ on $\bbR_+\times M^\ell$,
\begin{equation}\label{2.33}
\lim_{\ve\to 0}\ve\ln E_x\exp\big(\ve^{-1}\int_0^TW_t\big( X(q_1(t/\ve)),...,
X(q_\ell(t/\ve))\big)dt\big)=\int_0^Tr_{\mbox{cont}}(W_t)dt
\end{equation}
where $r_{\mbox{cont}}$ is the same as in Theorem \ref{thm2.3} with $W_t$ 
considered as a function on $M^\ell$ and in the discrete time case we either 
extend $q_j(t)=q_j([t])$ to all $t\geq 0$ in order to write the integral in 
exponent in (\ref{2.33}) or replace this integral by the corresponding sum.

Furthermore, for any $a,\del,\la>0$ 
and every continuous $\gam_t,\, t\in[0,\cT]$, $\gam_0=x$ there exist $\ve_0>0$
such that for all positive $\ve<\ve_0$,
\begin{equation}\label{2.34}
P\{\,\rho_{0,\cT}(\Psi_x^\ve,\gam)<\del\}\geq\exp\{
-\frac 1\ve(\cS_{0,\cT}(\gam)+\la)\}\quad\mbox{and}
\end{equation}
\begin{equation}\label{2.35}
P\{\,\rho_{0,\cT}(\Psi_x^\ve,\Phi^a_{0,\cT}(x))\geq\del\}\leq
\exp\{-\frac 1\ve(a-\la)\}
\end{equation}
where $\Psi_x^\ve(0)=x$, $\rho_{0,\cT}$ is the uniform distance and 
$\Phi_{0,\cT}^a(x)=\{\gam:\,\gam_0=x,\, \cS_{0,T}(\gam)\leq a\}$.
\end{theorem}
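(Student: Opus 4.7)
The plan is to reduce Theorem \ref{thm2.5} to the second-level large deviation principles already established in Theorems \ref{thm2.2} and \ref{thm2.4}, in two stages: first establish the time-inhomogeneous Laplace asymptotic (\ref{2.33}) by a time-slicing argument, then pass from this to the trajectory-level bounds (\ref{2.34})--(\ref{2.35}) via duality and a Freidlin--Wentzell style contraction applied to the averaged ODE.

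For (\ref{2.33}), partition $[0,T]$ into $K$ subintervals $[t_{j-1},t_j]$ of length $\Delta=T/K$ and replace $W_t$ by the piecewise-constant approximation $W_{t_j}$ on each; uniform continuity of $W$ makes the resulting error $o(1)$ in the exponent, uniformly in $\ve$. Using the strong Markov property at the times $t_{j-1}/\ve$, together with the Doeblin condition (\ref{2.1})/(\ref{2.16}) to decouple the superlinear coordinates $X(q_j(\cdot))$ with $j\geq 2$ across the pieces, the expectation factors (up to multiplicative corrections that become negligible after $\ve\ln$) into a product over $j$ of expectations of precisely the form treated by Theorems \ref{thm2.1}/\ref{thm2.3} on the window of length $\Delta/\ve\to\infty$. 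Each factor contributes $\ve\cdot(\Delta/\ve)\,r_{\mbox{cont}}(W_{t_j})=\Delta\, r_{\mbox{cont}}(W_{t_j})$ uniformly in the starting point (again by Doeblin mixing); summing in $j$ and letting first $\ve\to 0$ then $K\to\infty$ gives (\ref{2.33}).

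Combining (\ref{2.33}) with the variational identities (\ref{2.13})/(\ref{2.27}) applied pointwise in $t$, a Varadhan-lemma / projective G\"artner--Ellis argument yields an LDP at speed $\ve^{-1}$ for the trajectory of local occupational measures $\eta^\ve_\cdot$, with good rate functional $\int_0^T I(\eta_t)\,dt$ on absolutely continuous paths of measures. The slow variable $\Psi_x^\ve$ is, up to an exponentially negligible error, the continuous image of this trajectory under the ODE $\dot\gam_t=\bar B_{\eta_t}(\gam_t)$, $\gam_0=x$, with $B=F$ the driving function from (\ref{1.4})/(\ref{2.30}); this reduction uses the Lipschitz dependence of $F$ in $\xi$ and exponential equicontinuity of $\Psi^\ve$ on the compact interval $[0,T]$. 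The contraction principle then delivers both (\ref{2.34}) and (\ref{2.35}) with the rate functional $\cS_{0T}(\gam)$ defined in the statement.

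The main obstacle is the decoupling across time blocks in the proof of (\ref{2.33}). Although the index $j=1$ (where $q_1(t/\ve)=\al_1 t/\ve$) inherits the Markov structure naturally, the superlinear indices $q_j$ with $j\geq 2$ evaluate $X$ at times which do not themselves factor block by block. The growth conditions (\ref{2.4})/(\ref{2.18}), forcing $q_j(t+\gam)-q_j(t)\to\infty$, combined with uniform Doeblin mixing, allow the replacement of $X(q_j(\cdot))$ for $j\geq 2$ on each block by independent samples from the invariant measure $\mu$, at a multiplicative cost approaching $1$ geometrically fast. After this replacement the remaining conditional expectation is exactly of the form (\ref{2.7})/(\ref{2.20}) and $r_{\mbox{cont}}(W_{t_j})$ emerges via Perron--Frobenius / semigroup considerations. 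Carrying out these replacements uniformly in the arbitrarily varying starting state, and in a way compatible with the exponential scaling in $\ve$, is the technical crux.
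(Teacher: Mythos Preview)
Your proposal is correct and rests on the same two ingredients as the paper---the $\psi$-mixing/Doeblin replacement of the superlinear coordinates by $\mu$-samples, and Freidlin's time-inhomogeneous Laplace asymptotics for a single Markov process---but the paper organizes them more economically. Instead of interleaving the decoupling with the time-slicing block by block, the paper first applies the reduction of Proposition~\ref{prop3.1} \emph{globally}: the argument there is insensitive to the extra $t$-dependence of $W_t$, so one obtains (\ref{3.32}) in one stroke, replacing $W_t$ by $W_t^{(1)}$ (i.e.\ by $\ln\hat W_t$ in the discrete case) throughout the whole interval $[0,T]$. After this the exponent involves only $X(\al_1 t/\ve)$, the problem is the \emph{conventional} averaging setup for a Doeblin Markov process with time-dependent potential, and both (\ref{2.33}) and the trajectory bounds (\ref{2.34})--(\ref{2.35}) are quoted directly from Freidlin~\cite{Fr} (see also~\cite{Ki3}). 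Your scheme effectively re-derives Proposition~\ref{prop3.1} inside every time slice; separating the two steps as the paper does avoids that repetition, makes transparent that the reduction holds for any $k$ (not only $k=1$), and sidesteps the delicate point you flag---that the Markov property at the slice endpoints does not by itself factor the expectation, because the superlinear times $q_j(t/\ve)$ from earlier slices may exceed those from later ones---since after the global reduction only the linear coordinate remains and the Markov factorization is immediate.
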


\begin{remark}\label{rem2.6}
Suppose that the averaged motion $\bar \Xi^\ve$ has several attracting fixed
 points and limit circles. Then similarly to \cite{Fr} (Markov chains
 case) and \cite{Ki3} (dynamical systems case) we can study rare transitions
 of the slow motion $\Xi^\ve$ between these attractors. However, in the 
 nonconventional setup the situation is more complicated and this problem
 will not be dealt with in this paper.
 \end{remark}

Certain versions of Theorems \ref{thm2.2}--\ref{thm2.5} can be obtained for 
some classes of dynamical systems such as mixing subshifts of finite type and
$C^2$ hyperbolic and expanding transformations but in order not to interrupt
 probabilistic exposition here we discuss some of these results in the last
Section \ref{sec5}.

In the next section we will show that the study of large deviations in our
nonconventional setup can be always reduced to the case $k=\ell$, i.e. we have 
to deal only with $q_j(n)=jn,\, j=1,...,k$. So we discuss next this situation
allowing any $k\geq 1$ while assuming that $X(n),\, n\geq 0$, $q_j$ and $F$
are the same as in Theorem \ref{thm2.1}. It turns out that the treatment of
the general case when $X(0),X(1),X(2),...$ is a Markov chain requires a quite
 complicated and technical proof whose exposition here would make this paper
 too long, and so it will be discussed in another paper. Thus, we will restrict
ourselves here to a particular case when $X(n),\, n\geq 0$ are independent 
identically distributed (i.i.d.) random variables (or vectors).
 Namely, we are interested in large deviations estimates for $S_N(F)=
\sum_{n=1}^NF(X(n),X(2n),...,X(kn))$ where $X(n)\in M,\, n\geq 1$ are i.i.d.
random variables (vectors) with a compact support $M$. Let 
$r_1,...,r_m\geq 2$ be all primes not exceeding $k$.
Set $A_n=\{ a\leq n:\, a\,\,\mbox{is relatively prime with}\, r_1,...,r_m\}$ 
and $B_\eta(a)=\{ b\leq \eta:\, b=ar_1^{d_1}r_2^{d_2}\cdots r_m^{d_m}$ for some
nonnegative integers $d_1,...,d_m\}$. Now for any bounded measurable 
function $V$ on $M^k$ we write
\begin{equation}\label{2.36}
S_N(V)=\sum_{a\in A_N}S_{N,a}(V)\,\,\mbox{with}\,\, S_{N,a}(V)=
\sum_{b\in B_N(a)}V(X(b),X(2b),...,X(kb)).
\end{equation}
Observe that $S_{N,a}(V),\, a\in A_V$ is a collection of independent random
variables.

\begin{theorem}\label{thm2.6}
For any continuous function $V$ on $M^k$ the limit
\begin{eqnarray}\label{2.37}
&Q(V)=\lim_{N\to\infty}\frac 1N\ln E\exp\big(\sum_{n=1}^NV(X(n),X(2n),...
,X(kn))\big)\\
&=\lim_{N\to\infty}\frac 1N\sum_{a\in A_N}\ln E\exp S_{N,a}(V)\nonumber
\end{eqnarray}
exists and the functional $Q(V)$ is convex and lower semicontinuous.
If $V=V_\la$ depends on a parameter $\la$ and has a bounded derivative in
$\la$ then 
$Q(V_\la)$ is also differentiable in $\la$. Thus taking $V_\la=\la F$ we
obtain that also for $k\geq 2$ in the above i.i.d. setup both upper and lower
large deviations bounds (\ref{2.9}) and (\ref{2.10}) hold true with the rate 
functional $J$ being the Fenchel-Legendre transform $J(u)=\sup_\la(\la u-
Q(\la F))$ of $Q$. 
\end{theorem}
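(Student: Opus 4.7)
The plan is to exploit the decomposition \eqref{2.36}, the independence of $\{S_{N,a}(V)\}_{a\in A_N}$, and the i.i.d.\ structure to reduce the computation of $Q(V)$ to a weighted sum of ``atomic'' log-moment generating functions, and then pass to the limit by Abel summation. The crucial combinatorial fact is that the indices $\{jb:1\leq j\leq k,\,b\in B_N(a)\}$ appearing in $S_{N,a}(V)$ are all of the form $a\cdot c$ with $c$ a product of the primes $r_1,\dots,r_m\leq k$, so for distinct $a\in A_N$ the index sets are disjoint; by i.i.d.\ this makes $\{S_{N,a}(V)\}_{a\in A_N}$ independent, yielding the second equality in \eqref{2.37}. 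Again by i.i.d., $\ln E\exp S_{N,a}(V)$ depends on $a,N$ only through $\lfloor N/a\rfloor$: setting $C:=\{r_1^{d_1}\cdots r_m^{d_m}:d_i\geq 0\}$ and
\[
\phi(n,V):=\ln E\exp\sum_{c\in C,\,c\leq n}V(X(c),X(2c),\ldots,X(kc)),
\]
one has $\ln E\exp S_{N,a}(V)=\phi(\lfloor N/a\rfloor,V)$, hence $Q_N(V):=\frac{1}{N}\ln E\exp S_N(V)=\frac{1}{N}\sum_{a\in A_N}\phi(\lfloor N/a\rfloor,V)$.

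Next I would carry out an Abel summation. Writing $\phi(n,V)=\sum_{c\in C,\,c\leq n}\Delta(c,V)$ with increment $|\Delta(c,V)|\leq 2\|V\|_\infty$, interchanging the order of summation, and using the elementary inclusion--exclusion density $|A_N\cap[1,M]|=\rho M+O(1)$ with $\rho:=\prod_{i=1}^m(1-r_i^{-1})$, one obtains
\[
Q_N(V)=\rho\sum_{c\in C,\,c\leq N}\frac{\Delta(c,V)}{c}+O\Big(\frac{|C\cap[1,N]|}{N}\Big)\|V\|_\infty.
\]
Since $\sum_{c\in C}c^{-1}=\prod_i(1-r_i^{-1})^{-1}=\rho^{-1}<\infty$, the main series converges absolutely and the error is $O((\log N)^m/N)$, so the limit $Q(V)=\rho\sum_{c\in C}\Delta(c,V)/c$ exists. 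This simultaneously gives the first equality in \eqref{2.37} and a workable formula for $Q$.

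For the remaining properties, each $Q_N(\cdot)$ is convex in $V$ by H\"older's inequality applied to $E\exp S_N(\al V_1+(1-\al)V_2)$, so the pointwise limit $Q$ is convex; continuity of each $\phi(n,\cdot)$ on $C(M^k)$ together with uniform convergence of $Q_N\to Q$ on norm-bounded sets (from the tail estimate above) yields continuity, hence lower semicontinuity, of $Q$. For $V=V_\la$ with bounded $\partial_\la V_\la$, each $Q_N(V_\la)$ is smooth and convex in $\la$ with $|\partial_\la Q_N(V_\la)|\leq\|\partial_\la V_\la\|_\infty$; convexity combined with uniform convergence on compact $\la$-intervals forces $\partial_\la Q_N\to\partial_\la Q$, so $Q(V_\la)$ is differentiable in $\la$. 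Taking $V_\la=\la F$ and applying the G\"artner--Ellis theorem (existence, finiteness, and differentiability of the limit cumulant generating function, together with convexity giving essential smoothness), one obtains the upper bound \eqref{2.9} and lower bound \eqref{2.10} with rate function $J(u)=\sup_\la(\la u-Q(\la F))$.

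The main obstacle is Step 2: ensuring that the Abel-summation error is $o(N)$ uniformly for $V$ ranging over bounded sets in $C(M^k)$, both in the boundary regime $c$ close to $N$ and in the fluctuations of $|A_N\cap[1,M]|$ in short intervals. This uniformity is precisely what is needed in Step 3 to deduce that $\partial_\la Q(\la F)=\lim\partial_\la Q_N(\la F)$ rather than merely that $Q(\la F)$ is differentiable --- i.e., exactly the ingredient that feeds the G\"artner--Ellis theorem and makes the rate function $J$ non-trivial.
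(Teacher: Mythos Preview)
Your approach is essentially the paper's: both use the independence decomposition (\ref{2.36}), both observe that $E\exp S_{N,a}(V)$ depends on $(N,a)$ only through $\lfloor N/a\rfloor$ (equivalently, through $l=|B_N(a)|$), and both reduce $Q(V)$ to an absolutely convergent series. The bookkeeping differs: you telescope $\phi(n,V)=\sum_{c\in C,\,c\le n}\Delta(c,V)$ and Abel-sum over $c\in C$, obtaining $Q(V)=\rho\sum_{c\in C}\Delta(c,V)/c$; the paper instead partitions $A_N$ into level sets $A_N^{(l)}=\{a:|B_N(a)|=l\}$, shows $|A_N^{(l)}|/N\to r\,(e^{-\rho_{\min}(l)}-e^{-\rho_{\max}(l)})$ via inclusion--exclusion, and writes $Q(V)=r\sum_{l\ge 1}(e^{-\rho_{\min}(l)}-e^{-\rho_{\max}(l)})\ln R_l(V)$. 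A summation by parts (noting that $e^{\rho_{\min}(l)}$ is the $l$-th element of $C$ and $\ln R_l(V)=\phi(e^{\rho_{\min}(l)},V)$) shows the two formulas coincide, so your route is a legitimate and slightly slicker repackaging of the same computation.

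Your differentiability step, however, has a genuine gap. Convexity of the $Q_N$ together with their uniform convergence does \emph{not} force the limit $Q$ to be differentiable: take $Q_N(\la)=\sqrt{\la^2+1/N}\to|\la|$. The standard implication runs the other way---only at points where $Q$ is already known to be differentiable does one get $Q_N'\to Q'$---so your argument is circular. The paper sidesteps this by differentiating the explicit series term by term: since $|\partial_\la\ln R_l(V_\la)|\le l\cdot\sup|\partial_\la V_\la|$ and the weights satisfy $e^{-\rho_{\min}(l)}\le 2^{-(l^{1/m}-1)}$ (from (\ref{4.11+})), the differentiated series converges absolutely and uniformly on compact $\la$-intervals, which is what actually yields differentiability of $Q(V_\la)$. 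You can do exactly the same with your formula, using $|\partial_\la\Delta(c,V_\la)|\le 2\,|C\cap[1,c]|\cdot\sup|\partial_\la V_\la|$ together with $\sum_{c\in C}|C\cap[1,c]|/c<\infty$; this closes the gap directly and makes the ``uniformity obstacle'' you worry about at the end unnecessary.
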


In Section \ref{sec4} we will provide a rather explicit computation of the
limit (\ref{2.37}). As a model application of Theorem \ref{thm2.6} we can 
consider digits $X(n)=X(n,\om),\, n\geq 1$ of base $M$ expansions 
$\om=\sum_{n=1}^\infty \frac{X(n,\om)}{M^n}$, $X(n,\om)\in\{ 0,1,...,M-1\}$ 
of numbers $\om\in[0,1)$ which are i.i.d. random variables on the probability
space $([0,1),\cB,P)$ where $\cB$ is the Borel $\sig$-algebra and $P$ is the
Lebesgue measure. Take, for instance, $V(x_1,...,x_k)=\del_{\al_1x_1}
\del_{\al_2x_2}\cdots\del_{\al_kx_k}$ for some $\al_1,...,\al_k
\in\{ 0,1,...,M-1\}$ with $\del_{ij}=1$ if $i=j$ and $=0$, otherwise.
Then Theorem \ref{thm2.6} provides large deviations estimates for the number
\begin{eqnarray}\label{2.38}
&n_{\al_1,...,\al_k}(N,\om)=
\#\{ n\leq N:\, X(n,\om)=\al_1,X(2n,\om)=\al_2,\\
&...,X(kn,\om)=\al_k\}=\sum_{n=1}^NV(X(n,\om),...,X(kn,\om)).\nonumber
\end{eqnarray}

The same setup can be reformulated in the following way. Consider infinite
sequences of letters (colors, spins, etc.) taken out of an alphabet of
size $M$. Let $n_{\al_1,...,\al_k}(N)$ be the number of arithmetic 
progressions of length $k$ with both the first term and the difference equal
$n\leq N$ and having the letter (color, spin, etc.)
$\al_i$ on the place $i=1,2,...,k$. Then Theorem \ref{thm2.6}
yields large deviations bounds for $n_{\al_1,...,\al_k}(N)$ as $N\to\infty$
considered as a random variable on the space of sequences of letters with
any product probability measure, in particular, with uniform probability
measure which assigns the same weight to each combination of
$n$ consecutive letters (i.e. to each cylinder set of length $n$) for all
$n=1,2,...$.
We observe that another statistical physics interpretation of a particular
case of the above i.i.d. setup appeared independently in a recent paper 
\cite{CCGR} though large deviations bounds were obtained there only 
for the case $k=M=2$.

\section{Large deviations for Markov processes: $k=1$ case}
\label{sec3}\setcounter{equation}{0}

\subsection{Reduction to the $k=\ell$ case}\label{subsec3.1}

First, we will show that the study of the limit (\ref{2.6}) for any
$k\leq\ell$ can be reduced to the case $k=\ell$. In order to apply this
result not only to Markov chains but also to other fast mixing processes,
in particular to dynamical systems considered in Section \ref{sec5}, we will 
deal here with a somewhat more general setup.

Let $\{ X(n),\, n=0,1,...\}$ be a sequence of measurable mappings of a
measurable space $(\Om,\cF)$ to a Polish space $M$ considered
with its Borel $\sig$-algebra $\cB$. Since $(M,\cB)$ is isomorphic to a Borel
subset $\Up$ of an interval we can and do identify $M$ with $\Up$ and assume
that each $X(n)$ is real (or vector) valued. Then $\{ X(n),\, n=0,1,...\}$
becomes a real (or vector) valued stochastic process under each 
probability measure on $(\Om,\cF)$. Our setup includes two such measures $P$
and $\Pi$ while we assume that $X(n)\Pi=\mu$ does not depend on $n$, i.e. that
the one dimensional distribution $\mu$ of $X(n)$ on the probability space 
$(\Om,\cF,\Pi)$ is the same for all $n$. In order to state our conditions we
introduce also a family of 
$\sig$-algebras $\cF_{ml}\subset\cF$, $-\infty\leq m\leq l\leq\infty$
satisfying $\cF_{-\infty,\infty}=\cF$ and $\cF_{ml}\subset\cF_{m'l'}$ if 
$m'\leq m$ and $l'\geq l$. Next, we define a modified
$\psi$-mixing (dependence) coefficient by
\begin{eqnarray*}
&\psi(n)=\psi_{P,\Pi}(n)=\sup_{l\geq 0,g}\{\| E_P(g|\cF_{-\infty,l})-
E_\Pi g\|_\infty :\\
&\,g\,\,\mbox{is}\,\,\cF_{l+n,\infty}-\mbox{measurable and}\,\,
E_\Pi|g|\leq 1\}
\end{eqnarray*}
where $E_Q$ is the expectation with respect to a probability measure $Q$
and $\|\cdot\|_\infty$ is the $L^\infty(\Om,P)$ norm.
The rational behind introduction of two probability measures $P$ and $\Pi$ 
above is to allow $X(n),\, n\geq 0$ to be a Markov chain with an arbitrary
initial distribution (in particular, starting at a point) under $P$ while
$X(n)$ is stationary under $\Pi$ and the distribution of $X(n)$ under $P$
converges to $\mu=X(0)\Pi$. Furthermore, we will not assume measurability
of $X(n)$'s with respect to some of $\sig$-algebras $\cF_{m,l}$ but instead
will rely on approximation coefficients defined for each bounded continuous
function $V=V(x_1,...,x_\ell)$ on $M^\ell$ by
\begin{eqnarray*}
&\be_V(n)=\max_{1\leq j\leq\ell}\sup_{x_1,...,x_{j-1},x_{j+1},...,x_\ell\in M}
\sup_{m\geq 0}\|V(x_1,...,x_{j-1},\\
&X(m),x_{j+1},...,x_\ell)-V(x_1,...,x_{j-1},E_P(X(m)|\cF_{m-n,m+n}),
x_{j+1},...,x_\ell)\|_\infty.
\end{eqnarray*}
Since $V$ is continuous we can take here the supremum over a countable dense
set in $M^{\ell-1}$, and so outside of one $P$-measure zero set $\be_V(n)$
gives a uniform bound of the difference above.

\begin{proposition}\label{prop3.1}
Let $V(x_1,...,x_\ell)$ be a bounded continuous function on $M^\ell$ and 
assume that
\begin{equation}\label{3.1}
\lim_{n\to\infty}(\psi(n)+\be_V(n))=0
\end{equation}
together with the conditions (\ref{2.3}) and (\ref{2.4}) on functions $q_j,\, 
j=1,...,\ell$. Then,
\begin{eqnarray}\label{3.2}
&\lim_{N\to\infty}\frac 1N\big(\ln E_P\exp\big(\sum_{n=1}^NV(X(q_1(n)),...,
X(q_\ell(n)))\big)\\
&-\ln E_P\exp\big(\sum_{n=1}^NV^{(k)}(X(n),X(2n),...,X(kn)))\big)\big)=0
\nonumber\end{eqnarray}
where for each $m<\ell$,
\begin{eqnarray}\label{3.3}
&V^{(m)}(x_1,...,x_m)=\ln\int_M...\int_M\exp(V(x_1,...,x_m,x_{m+1},...,x_\ell))
\\
&d\mu(x_{m+1})...d\mu(x_\ell)\,\,\mbox{and}\,\, V^{(\ell)}=V.\nonumber
\end{eqnarray}
If, in fact, $X(n)$ is $\cF_{n,n}$-measurable then (\ref{3.2}) holds true
for any bounded measurable function $V$ assuming only that $\psi(n)\to 0$
as $n\to\infty$.
\end{proposition}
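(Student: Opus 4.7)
The plan is a finite induction on $\ell-k$, the number of super-linearly growing coordinates. The base case $\ell=k$ is trivial since $V^{(k)}=V$. For the inductive step it suffices to prove the one-step reduction
\begin{equation*}
\frac{1}{N}\ln E_P\exp\sum_{n=1}^NV\big(X(q_1(n)),\ldots,X(q_\ell(n))\big)-\frac{1}{N}\ln E_P\exp\sum_{n=1}^NV^{(\ell-1)}\big(X(q_1(n)),\ldots,X(q_{\ell-1}(n))\big)\to 0,
\end{equation*}
since iterating and using the elementary identity $(V^{(\ell-1)})^{(m)}=V^{(m)}$ for $m\le\ell-1$ drives $V$ down to $V^{(k)}$ in $\ell-k$ steps. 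Heuristically, by (\ref{2.4}) the time $q_\ell(n)$ is eventually isolated from every other relevant time $q_{j'}(m')$, so that after a $\be_V$-smoothing and a $\psi$-mixing correction, $X(q_\ell(n))$ behaves as an independent $\mu$-sample and can be integrated out of the exponential.

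For the one-step reduction I would first replace each $X(q_j(n))$ inside $V$ by $\hat X_R(q_j(n)):=E_P(X(q_j(n))|\cF_{q_j(n)-R,q_j(n)+R})$; by the definition of $\be_V$ this changes each summand by at most $\ell\be_V(R)$ and hence changes $\frac{1}{N}\ln E_P\exp(\cdots)$ by at most $\ell\be_V(R)$. Call $n$ \emph{good} if $[q_\ell(n)-R,q_\ell(n)+R]$ is disjoint from every $[q_{j'}(m')-R,q_{j'}(m')+R]$ with $(j',m')\ne(\ell,n)$, $1\le m'\le N$. Using (\ref{2.3}), for $j'\le k$ one has $q_{j'}(m')\le kN$, so collisions force $n\le q_\ell^{-1}(kN+R)=o(N)$ by super-linearity of $q_\ell$. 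For $k<j'<\ell$, iterating the $\liminf$ condition in (\ref{2.4}) gives $q_\ell(\gam n)>q_{j'}(n)$ eventually for every $\gam>0$, so the bad count contributed by $j'$ is $\le\gam N+O_R(1)$; letting $\gam\to 0$ yields $o(N)$. For $j'=\ell$, $m'\ne n$, the gap condition in (\ref{2.4}) allows only finitely many bad $n$. For good $n$, the $\sig$-algebra $\cG_n$ generated by the $\hat X_R(q_{j'}(m'))$ with $(j',m')\ne(\ell,n)$ lies in $\cF_{-\infty,q_\ell(n)-R-1}\vee\cF_{q_\ell(n)+R+1,\infty}$, so $\psi$-mixing with lag $R$ puts the conditional distribution of $\hat X_R(q_\ell(n))$ within $\psi(R)$ of $\mu$. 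Integrating this coordinate out inside the exponential then replaces that factor by $\exp V^{(\ell-1)}(X(q_1(n)),\ldots,X(q_{\ell-1}(n)))$ up to a multiplicative $1+O(\psi(R))$. Bad indices contribute at most $2\|V\|_\infty$ times their count, namely $o(N)$, to the log-MGF difference. Sending $N\to\infty$ and then $R\to\infty$ makes all error terms vanish.

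The main obstacle is the $\psi$-mixing step: the coefficient $\psi(\cdot)$ defined in the preliminaries is one-sided, while $\cG_n$ involves both ``past'' and ``future'' of $q_\ell(n)$. I would handle this either by processing the good indices sequentially in decreasing order of $q_\ell(n)$, so that at each stage the surviving variables all lie in the past of the one being integrated out and one-sided $\psi$-mixing applies to a Radon--Nikodym estimate whose logarithms telescope, or by first upgrading to two-sided $\psi$-mixing via the Markov property under Doeblin. The final sentence of the proposition is then immediate: when each $X(n)$ is $\cF_{n,n}$-measurable the smoothing step is unnecessary, $\be_V$ drops out of the argument, and only $\psi(n)\to 0$ together with boundedness of $V$ are needed.
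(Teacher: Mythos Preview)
Your strategy coincides with the paper's: smooth each $X(m)$ to a $\cF_{m-R,m+R}$-measurable approximant, discard an initial block of indices at cost $O(\gam N)$ in the exponent, and peel off the top coordinate one index at a time in decreasing order so that one-sided $\psi$-mixing applies. The paper performs the peeling for all levels $m=\ell,\ell-1,\dots,k+1$ in a single pass (via the quantities $Z_r^{(m)}(\gam N,n,N)$) rather than as an induction, and ties the smoothing radius to the available gap by taking $r=r(N)=[\tfrac13 d_\gam(N)]\to\infty$ instead of fixing $R$ and sending $R\to\infty$ afterwards, but these are packaging differences only.

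One step in your write-up is not justified as stated. Your definition of ``good'' (the window $[q_\ell(n)-R,q_\ell(n)+R]$ is disjoint from every other window) does not by itself ensure that ``the surviving variables all lie in the past of the one being integrated out'': for that you need $q_\ell(n)>q_{j'}(m')$ for every $j'\le\ell-1$ and $m'\le N$, not merely $|q_\ell(n)-q_{j'}(m')|>2R$. When $k<j'<\ell$ the range $\{q_{j'}(m'):m'\le N\}$ can be sparse, so a good $n$ may have $q_\ell(n)$ sitting in a gap of that range while $q_{\ell-1}(N)$ still lies in its future, and then the one-sided conditioning fails. The paper sidesteps this by discarding \emph{all} $n\le\gam N$: since $q_m(\gam N)-q_{m-1}(N)\to\infty$ (derived from (\ref{2.4})), for $n>\gam N$ the time $q_m(n)$ automatically dominates every $q_j(m')$ with $j<m$ and $m'\le N$, and the sequential conditioning on $\cF_{-\infty,q_m(n-1)+r}$ goes through cleanly. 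Your own bad-count analysis already shows that enlarging the bad set to all of $\{1,\dots,[\gam N]\}$ costs only $O(\gam N)$, so this is a painless fix. A second minor point: after the $\psi$-step you obtain $E_\Pi$ of a function of $\hat X_R(q_\ell(n))$, which is not literally the $\mu$-integral defining $V^{(\ell-1)}$; one more $\be_V(R)$ correction is needed there, which is why the paper's multiplicative error is $1\pm\eta(r)$ with $\eta(r)=(\psi(r)+2\be_V(r)+2\be_V(r)\psi(r))e^{C(V)}$ rather than just $1\pm\psi(r)$.
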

\begin{proof} 
Observe that (\ref{2.4}) yields
\begin{equation}\label{3.4}
\lim_{n\to\infty}(q_j(\gam n)-q_{j-1}(n))=\infty\,\,\mbox{for any}\,\,
j>k\,\,\mbox{and}\,\,\gam>0.
\end{equation}
Set
\[
d_\gam(n)=\min_{k+1\leq j\leq\ell}\min\big(q_j(\gam n)-q_{j-1}(n),
\min_{l\geq\gam n}(q_j(l)-q_j(l-1))\big)
\]
and observe that $d_\gam(n)\to\infty$ as $n\to\infty$ in view of (\ref{2.4})
and (\ref{3.4}). For any $l=0,1,...$ and $0\leq r\leq\infty$ set
\[
X_r(l)=E_P(X(l)|\cF_{l-r,l+r}).
\]
Next, for $m=1,2,...,\ell$, $a\leq b\leq c$ and $0\leq r\leq\infty$ denote
\begin{eqnarray*}
&Z_r^{(m)}(a,b,c)=E_P\exp\big(\sum_{a<l\leq b}V^{(m)}(X_r(q_1(l)),...,
X_r(q_m(l)))\\
&+\sum_{b<l\leq c}V^{(m-1)}(X_r(q_1(l)),...,X_r(q_{m-1}(l)))\big).
\end{eqnarray*}
If $b=c$, i.e. we have only the first sum above, we set $Z_r^{(m)}(a,b,c)=
Z_r^{(m)}(a,b)$. If $r=\infty$ we drop the index $r$ and write just
$Z^{(n)}(a,b,c)$ or $Z^{(m)}(a,b)$. Observe that
\begin{equation}\label{3.5}
e^{-C(V)\gam N}Z_r^{(m)}(\gam N,N)\leq Z^{(m)}_r(0,N)\leq e^{C(V)\gam N}
Z_r^{(m)}(\gam N,N)
\end{equation}
where $C(V)=\sup_{(x_1,...,x_\ell)\in M^\ell}|V(x_1,...,x_\ell)|$. By the
definition of $\be_V(n)$ (and the remark after it) we obtain also that for
any $m=1,2,...,\ell$, $a\leq b\leq c$ and $0\leq r\leq\infty$,
\begin{equation}\label{3.6}
Z_r^{(m)}(a,b,c)e^{-(c-a)\ell\be_V(r)}\leq Z^{(m)}(a,b,c)\leq Z_r^{(m)}(a,b,c)
e^{(c-a)\ell\be_V(r)}.
\end{equation}

Let $g=g(x,y)$ be a bounded measurable function on a product $M_1\times M_2$
(for some measurable spaces $(M_1,\cB_1)$ and $(M_2,\cB_2)$) and
$X:\Om\to M_1$ and $Y:\Om\to M_2$ be $\cF_{-\infty,l}-$ and 
$\cF_{l+n,\infty}-$measurable random variables (maps), respectively. Then it
follows from the definition of $\psi(n)=\psi_{P,\Pi}(n)$ that
\begin{equation}\label{3.7}
|E_P(g(X,Y)|\cF_{-\infty,l})-g_\Pi(X)|\leq \psi(n)|g|_\Pi(X)
\end{equation}
where $g_\Pi(x)=E_\Pi g(x,Y)$ and $|g|_\Pi(x)=E_\Pi |g(x,Y)|$. Now take
$r=r_\gam(N)=[\frac 13d_\gam(N)]$ where $[\cdot ]$ denotes the integral part.
 Then for all $N\geq n\geq\gam N+1$, $m=k+1,...,\ell$ and $N$ large enough
\begin{eqnarray}\label{3.8}
&Z_r^{(m)}(\gam N,n,N)=E_P\big( J\exp\big(\sum_{\gam N<l\leq n-1}
V^{(m)}(X_r(q_1(l)),...,X_r(q_m(l)))\\
&+\sum_{n<l\leq N}V^{(m-1)}(X_r(q_1(l)),...,X_r(q_{m-1}(l)))\big)\nonumber
\end{eqnarray}
where
\begin{equation*}
J=J_r(n)=E_P\big(\exp\big(V^{(m)}(X_r(q_1(n)),...,X_r(q_m(n)))\big)\big\vert
\cF_{-\infty,q_m(n-1)+r}\big).
\end{equation*}
By (\ref{3.7}) and the definition of $\be_V$ we conclude that
\begin{eqnarray}\label{3.9}
&\big\vert J-\int\exp\big(V^{(m)}(X_r(q_1(n)),...,X_r(q_{m-1}(n)),y)\big)
d\mu(y)\big\vert\\
&\leq\eta(r)\int\exp\big(V^{(m)}(X_r(q_1(n)),...,X_r(q_{m-1}(n)),y)\big)d\mu(y)
\nonumber\end{eqnarray}
where $\eta(n)=(\psi(n)+2\be_V(n)+2\be_V(n)\psi(n))e^{C(V)}\to 0$ as 
$n\to\infty$.
Employing (\ref{3.8}) and (\ref{3.9}) for $n=N,N-1,...,[\gam N]+1$ we obtain
 that
 \begin{equation}\label{3.10}
 (1-\eta(r))^NZ_r^{(m-1)}(\gam N,N)\leq Z_r^{(m)}(\gam N,N)\leq 
 (1+\eta(r))^NZ_r^{(m-1)}(\gam N,N).
 \end{equation}
 Next, we use (\ref{3.10}) for $m=\ell,\ell-1,...,k+1$ which together
 with (\ref{3.5}) and (\ref{3.6}) yields that
 \begin{eqnarray}\label{3.11}
 &(1-\eta(r))^{\ell N}e^{-2N(\ell\be_V(r)+C(V)\gam)}Z^{(k)}(0,N)\leq 
 Z^{(\ell)}(0,N)\\
 &\leq (1+\eta(r))^{\ell N}e^{2N(\ell\be_V(r)+C(V)\gam)}Z^{(k)}(0,N).\nonumber
 \end{eqnarray}
 Taking $\ln$ in (\ref{3.11}), dividing by $N$, letting $N\to\infty$ and
 taking into account that then $r=r(N)\to\infty$, we obtain that
 \begin{equation*}
 \limsup_{N\to\infty}\frac 1N\big\vert\ln Z^{(\ell)}(0,N)-\ln Z^{(k)}(0,N)
 \big\vert\leq 2C(V)\gam
 \end{equation*}
 and (\ref{3.2}) follows since $\gam>0$ is arbitrary.
 
 If $X(n)$ is $\cF_{n,n}$-measurable for each $n$ then we do not have to
 deal with the approximation coefficient $\be_V(r)$ and $X_r=X,\, Z_r^{(m)}=
 Z^{(m)}$ above. Hence all above arguments remain true with $\be_V(r)=0$
 for any bounded measurable $V$ and we obtain (\ref{3.2}) provided 
 $\psi(n)\to 0$ as $n\to\infty$.
\end{proof}

It is easy to check the conditions of Proposition \ref{prop3.1} for Markov
chains $X(n),\, n\geq 0$ satisfying the "strong" Doeblin condition (\ref{2.1}).
 Indeed,
denote by $\cF_{l,m},\, l\leq m$ the $\sig$-algebra generated by $X(l),...,
X(m)$ with $\cF_{l,\infty}$ being the minimal $\sig$-algebra containing all
$\cF_{l,m},\, m\geq l$ and we set $\cF_{l,m}=\cF_{0,m}$ for $l<0$ and $m\geq 0$.
If $g$ is $\cF_{l+n,\infty}-$measurable then by the Markov property
\begin{equation}\label{3.12}
E_P(g|\cF_{-\infty,l})=\int P(n,X(l),dy)E_{P_y}g
\end{equation}
where $P_y$ is the probability measure on the path space of the Markov 
chain $X(n)$ starting at $y$. The Chapman-Kolmogorov equation sais that for 
any $n\geq n_0$,
\[
P(n,x,G)=\int P(n-n_0,x,dy)P(n_0,y,G),
\]
and so by (\ref{2.1}) for all such $n$,
\begin{equation*}
C^{-1}\nu(G)\leq P(n,x,G)\leq C\nu(G).
\end{equation*}
This together with the Radon-Nikodim theorem yields existence for 
$\nu$-almost all $y$ and $n\geq n_0$ of the transition density $p(n,x,y)$
satisfying
\begin{equation*}
C^{-1}\leq p(n,x,y)=\frac {dP(n,x,\cdot)}{d\nu}(y)\leq C.
\end{equation*}
It is well known (see, for instance, \cite{Do}) that (\ref{2.1}) and
 (\ref{2.2}) imply that
\begin{equation}\label{3.13}
(1-Ke^{-\ka n})p(y)\leq p(n,x,y)\leq (1+Ke^{-\ka n})p(y)
\end{equation}
for some $K,\ka>0$ independent of $n\geq n_0$. If $\Pi$ is the stationary
probability of the Markov chain on the path space then
\[
E_\Pi g=\int p(y)E_{P_y}gd\nu(y).
\]
Hence, by (\ref{3.12}) and (\ref{3.13}),
\[
\| E_P(g|\cF_{-\infty,l})-E_\Pi g\|_\infty\leq Ke^{-\ka n}E_\Pi|g|.
\]
Thus the condition (\ref{3.1}) with $\be_V(n)=0$ is satisfied in our 
Markov chains case.

\begin{corollary}\label{cor3.2}
Assume that conditions of Proposition 3.1 hold true. Suppose that for any
bounded measurable function $V_\la(x_1,...,x_k)$ on $\bbR\times M^k$ having
a bounded in $x_1,...,x_k$ derivative in a parameter $\la\in(-\infty,\infty)$
the limit
\begin{equation*}
Q(V_\la)=\lim_{N\to\infty}\frac 1N\ln E_x\exp\big(\sum_{n=1}^NV_\la(X(n),X(2n),
...,X(kn))\big)
\end{equation*}
exists, it is a lower semicontinuous convex functional and it is differentiable
in the parameter $\la$. Then for any
bounded measurable function $W_\la(x_1,...,x_\ell)$ on $\bbR\times M^\ell$ 
having a bounded in $x_1,...,x_\ell$ derivative in a parameter 
$\la\in(-\infty,\infty)$ the limit
\begin{equation*}
Q(W_\la)=\lim_{N\to\infty}\frac 1N\ln E_x\exp\big(\sum_{n=1}^NW_\la(X(q_1(n)),
,...,X(q_\ell(n)))\big)=Q(W_\la^{(k)})
\end{equation*}
exists, it is a lower semicontinuous convex functional and it is differentiable
in the parameter $\la$. In particular, the large deviations estimates in the
form (\ref{2.9}) and (\ref{2.10}) hold true then with the rate functional
$J$ given by (\ref{2.8}) with $W_\la=\la F$.
\end{corollary}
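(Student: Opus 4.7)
The approach is to chain together Proposition 3.1 with the hypothesis of the corollary at the $k$-variable level, and then appeal to the G\"artner--Ellis theorem for the large deviations bounds. First, I would apply Proposition 3.1 to the bounded measurable function $W_\la$ for each fixed $\la$; since its hypotheses are assumed to hold throughout Section 3, this yields the identity
\begin{equation*}
\lim_{N\to\infty}\tfrac 1N\ln E_x\exp\Big(\sum_{n=1}^NW_\la(X(q_1(n)),\ldots,X(q_\ell(n)))\Big)=\lim_{N\to\infty}\tfrac 1N\ln E_x\exp\Big(\sum_{n=1}^NW_\la^{(k)}(X(n),\ldots,X(kn))\Big),
\end{equation*}
with the two sides existing simultaneously. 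So it suffices to verify that $W_\la^{(k)}$, defined by (3.3), itself meets the hypothesis placed on $V_\la$.

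For this I would differentiate (3.3) under the integral sign, justified by the uniform boundedness of $\partial_\la W_\la$ together with dominated convergence, to obtain
\begin{equation*}
\partial_\la W_\la^{(k)}(x_1,\ldots,x_k)=\frac{\displaystyle\int\partial_\la W_\la(x_1,\ldots,x_\ell)\,e^{W_\la(x_1,\ldots,x_\ell)}\,d\mu(x_{k+1})\cdots d\mu(x_\ell)}{\displaystyle\int e^{W_\la(x_1,\ldots,x_\ell)}\,d\mu(x_{k+1})\cdots d\mu(x_\ell)},
\end{equation*}
which is uniformly bounded by $\|\partial_\la W_\la\|_\infty$; boundedness of $W_\la^{(k)}$ itself is immediate from boundedness of $W_\la$. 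The assumed existence, lower semicontinuity, convexity, and $\la$-differentiability of $Q$ on $k$-variable functions then apply directly to $Q(W_\la^{(k)})$, and by the first step these same properties transfer to $Q(W_\la)$. Convexity and lower semicontinuity of the functional $W\mapsto Q(W)$ on $\ell$-variable functions propagate through the map $W\mapsto W^{(k)}$ because this map is itself convex (a log-integral-of-exponential in $W$), and the $k$-variable $Q$ is both monotone non-decreasing and convex, so the composition is convex; lower semicontinuity descends likewise under pointwise monotone limits.

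Finally, to deduce the large deviations bounds I would specialize to $W_\la=\la F$, for which $\partial_\la W_\la=F$ is bounded. The logarithmic moment generating function limit $\la\mapsto Q(\la F)$ is then finite on all of $\bbR$ (because $F$ is bounded), convex, and differentiable; in one dimension these properties constitute the essential-smoothness hypothesis of the G\"artner--Ellis theorem, which then delivers (2.9) for closed $K\subset\bbR$ and (2.10) for open $U\subset\bbR$ with rate functional $J(u)=\sup_\la(\la u-Q(\la F))$.

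The steps are essentially a synthesis; I do not anticipate a serious obstacle since the reduction has been carried out in Proposition 3.1 and the $k=\ell$ case is assumed. The mildly delicate point is the transfer of convexity and lower semicontinuity from the $k$-variable to the $\ell$-variable functional, but this is forced by the monotone-convex composition structure of $W\mapsto W^{(k)}$. The substantive content of the corollary therefore lies upstream, in Proposition 3.1 and in the hypothesized analysis of the $k=\ell$ situation.
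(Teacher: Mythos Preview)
Your proposal is correct and follows the same approach as the paper: apply Proposition~3.1 to identify $Q(W_\la)$ with $Q(W_\la^{(k)})$, verify that $W_\la^{(k)}$ inherits boundedness and a bounded $\la$-derivative from $W_\la$, and then invoke the hypothesis at the $k$-variable level. The paper's own proof is in fact terser and does not spell out either the convexity transfer through $W\mapsto W^{(k)}$ or the G\"artner--Ellis step for the large deviations bounds, so your added detail there is a sound elaboration rather than a departure.
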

\begin{proof} By Proposition \ref{prop3.1}, $Q(W_\la)=Q(W_\la^{(k)})$ and 
we see from (\ref{3.3}) that if $W_\la$ is bounded and has a bounded derivative
in $\la$ then so does $W_\la^{(k)}$. Hence, by the assumption $Q(W_\la^{(k)})$
is a lower semicontinuous convex functional and it is differentiable in $\la$
which implies the same for $Q(W_\la)$ and the result follows. 
\end{proof}

Now let $k=1$ and $V=W_\la$ as in Theorem \ref{thm2.1}. Then $\hat {W}_\la=
V^{(1)}$ and by Proposition \ref{3.1}, 
\begin{equation}\label{3.14}
Q(W_\la)=\lim_{N\to\infty}\frac 1N\ln E_x\exp\big(\sum_{n=1}^N
\hat {W}_\la(X(n))\big).
\end{equation}
Thus we arrive at the standard limit appearing in "conventional" large 
deviations results which is well known for Markov chains $X(n),\, n\geq 0$
satisfying our conditions as it is described in Theorem \ref{thm2.1}.
Differentiability of $Q(W_\la)$ in $\la$ follows from standard results on 
positive operatos (see, for instance, \cite{Kr}) and we derive now
Theorem \ref{thm2.1} from well known "conventional" large deviations results
(see, for instance, \cite{DV1}, \cite{Ki2} and Section 2.3 in \cite{DZ}). \qed

\subsection{2nd level of large deviations}\label{subsec3.2}

Recall that in the setup of Theorem \ref{thm2.2} we have $k=1$, $M$ being
a compact space and the result is about large deviations for occupational
measures $\zeta_N$ appearing there. Let $W$ be a continuous function on
$M^\ell$ with $\hat W$ defined by (\ref{2.5}). By Proposition \ref{prop3.1}
together with the well known facts (see, for instance, \cite{DV1}, \cite{DZ}
and \cite{Ki2}),
\begin{equation}\label{3.15}
Q(W)=\lim_{N\to\infty}\frac 1N\ln E_x\exp\big(\sum_{n=1}^NW(X(q_1(n)),...,
X(q_\ell(n)))\big)=\ln(r(W))
\end{equation}
where $r(W)$ is the spectral radius of the operator
\begin{equation}\label{3.16}
R(W)g(x)=E_x\big(g(X(1))\hat W(X(1))\big)=E_x\big(g(X(1))e^{\ln\hat W(X(1))}\big).
\end{equation}
 Observe, that by the Donsker--Varadhan variational formula (see \cite{DV1} and
 \cite{DV2}),
\begin{equation}\label{3.17}
Q(W)=\sup_{\nu\in\mathcal P(M)}(\int_M\ln\hat W(x)d\nu(x)-\hat I(\nu))
\end{equation}
where $\hat I(\nu)=-\inf_{u\in C_+(M)}\int\ln\frac {E_xu(X(1))}{u(x)}d\nu(x)$
 and the infimum is taken over positive continuous functions on $M$.

Next, let $Y^{(i)}(n),\, i=2,...,\ell;\, n=0,1,2,...$ be i.i.d. $M$-valued
random variables with the distribution $\mu$, all of them independent of
the Markov chain $X(n),\, n\geq 0$. Then it is easy to see that 
\begin{equation}\label{3.18}
\lim_{N\to\infty}\frac 1N\ln E_x\exp\big(\sum_{n=1}^NW(X_n,Y^{(2)}(n),...,
Y^{(\ell)}(n))\big)=Q(W).
\end{equation}
Indeed, let $\cF_X$ be the $\sigma$-algebra generated by the Markov
chain $X(n),n\geq 0$. Then
\begin{eqnarray}\label{3.19}
&E_x\exp\big(\sum_{n=1}^NW(X(n),Y^{(2)}(n),...,Y^{(\ell)}(n))\big)\\
&=E_x\big(E_x(\exp(\sum_{n=1}^NW(X(n),Y^{(2)}(n),...,Y^{(\ell)}(n)))|
\cF_X)\big)\nonumber\\
&=E_x\exp(\sum_{n=1}^N\ln\hat W(X(n)))\nonumber
\end{eqnarray}
and (\ref{3.18}) follows. But now we have the standard situation for the
Markov chain $(X(n),Y^{(2)}(n),...,Y^{(\ell)}(n)),\, n\geq 0$, and so
by the Donsker--Varadhan variational formula (see \cite{DV1} and \cite{DV2}),
\begin{equation}\label{3.20}
Q(W)=\sup_{\nu\in\mathcal P(M\times\cdots\times M)}\big(\int 
W(x_1,x_2,...,x_\ell)d\nu(x_1,...,x_\ell)-I(\nu)\big)
\end{equation}
where 
\begin{eqnarray}\label{3.21}
&I(\nu)=-\inf_{u\in C_+(M\times\cdots\times M)}\int_{M\times\cdots\times M}\\
&\ln\frac {E_{x_1}\int u(X(1),x_1,...,x_\ell)d\mu(x_2)...d\mu(x_\ell)}
{u(x_1,...,x_\ell)}d\nu(x_1,...,x_\ell).\nonumber
\end{eqnarray}
It is known here (see, for instance, Proposition 5.1 in \cite{Ki1}) that there
 exists a unique $\nu=\nu_W$ on which the 
supremum in (\ref{3.20}) is attained and it follows from the standard theory
(see, for instance, \cite{Ki2}) that $I(\nu)$ is the rate functional for the second level
 large deviations both for the auxiliary occupational measures 
 \[
 \frac 1N\sum_{n=1}^N\delta_{(X_n,Y^{(2)}_n,...,Y^{(\ell)}_n)}
 \]
 and for our nonconventional occupational measures $\zeta_N$.\qed

\subsection{Continuous time case}\label{subsec3.3}

Similarly to the discrete time case, the main step in the proof of Theorem 
\ref{thm2.3} is to establish (\ref{2.19}) and to identify the limit there
as the spectral radius of the semigroup (\ref{2.20}).

From (\ref{2.16}) it follows that for any $t\geq t_0$ and every measurable
set $G\subset M$,
\begin{equation}\label{3.22}
P(t,x,G)=\int_Gp(t,x,y)d\nu(y)\,\,\,\mbox{with}\,\,\, C^{-1}\leq p(t,x,y)
\leq C.
\end{equation}
Furthermore, similarly to (\ref{3.5}) (see \cite{Do}), 
\begin{equation}\label{3.23}
(1-Ke^{-\ka t})p(y)\leq p(t,x,y)\leq (1+Ke^{-\ka t})p(y)
\end{equation}
where $p(y)=\frac {d\mu}{d\nu}(y)$ is the density of the unique invariant
measure $\mu$ of the Markov process $X$. Observe that (\ref{2.18}) implies
also that for any $j\geq k+1$ and $\gam>0$,
\begin{equation}\label{3.24}
\lim_{t\to\infty}(q_j(\gam t)-q_{j-1}(t))=\infty.
\end{equation}

Let $V=V(x_1,...,x_\ell)$ be a bounded measurable function on $M^\ell$ and for
$m=1,2,...,\ell$ set 
\begin{equation*}
V^{(m)}_{\mbox{cont}}(x_1,...,x_m)=\int_M...\int_MV(x_1,...,x_m,x_{m+1},...,
x_\ell)d\mu(x_{m+1})...d\mu(x_\ell)
\end{equation*}
with $V^{(\ell)}_{\mbox{cont}}=V$. Set $t_n(\gam,T)=\gam T+n(\gam+\gam^2)$ for
$n=0,1,2,...,M(\gam,T)-1$ where $M(\gam,T)=[(T(1-\gam )/(\gam+\gam^2)]$. Next,
for $a\leq b\leq c$ and $m=1,2,...,\ell$ we denote
\begin{eqnarray*}
&Z^{(m)}_x(a,b,c)=E_x\exp\big(\sum_{a\leq n<b}\int_{t_n(\gam,T)}^{t_n(\gam,T)
+\gam}V^{(m)}_{\mbox{cont}}(X(q_1(t)),...,X(q_m(t)))dt\\
&+\sum_{b\leq n<c}\int_{t_n(\gam,T)}^{t_n(\gam,T)+\gam}V^{(m-1)}_{\mbox{cont}}
(X(q_1(t)),...,X(q_{m-1}(t)))dt\big)
\end{eqnarray*}
and set $Z^{(m)}_x(a,b)=Z^{(m)}_x(a,b,b)$. Observe that 
$Z_x^{(\ell)}(0,M(\gam,T))$ does not contain the integration from 0 to $\gam T$
as well as the sum of integrals from $t_n(\gam,T)+\gam$ to $t_n(\gam,T)+\gam
+\gam^2$ which are both present in the integral from 0 to $T$, and so 
estimating these missing parts we arrive at the inequality
\begin{eqnarray}\label{3.25}
&\exp(-2C(V)\gam T)Z_x^{(\ell)}(0,M(\gam,T))\leq E_x\exp
\big(\int_0^TV(X(q_1(t)),\\
&...,X(q_\ell(t)))dt\big)\leq\exp(2C(V)\gam T)Z_x^{(\ell)}(0,M(\gam,T))\nonumber
\end{eqnarray}
where $C(V)=\sup_{(x_1,...,x_\ell)}|V(x_1,...,x_\ell)|$.

Denote by $\cF_t$ the $\sig$-algebra generated by $X(s),\, s\leq t$. Then
by (\ref{2.18}) and (\ref{3.24}) for all $T$ large enough if $n\geq 1,\,
T\geq t\geq t_n(\gam,T)$ and $s\leq t_{n-1}(\gam,T)+\gam$ then $X(q_1(s)),...,
X(q_m(s))$ and $X(q_1(t)),...,X(q_{m-1}(t))$ are $\cF_{q_m(t_n(\gam,T)
-\gam^2)}-$measurable. Hence,
\begin{eqnarray}\label{3.26}
&Z^{(m)}_x(0,n,M(\gam,T))=E_x\big( J_{m,n}\exp\big(\sum_{0\leq l<n}
\int_{t_l(\gam,T)}^{t_l(\gam,T)+\gam}\\
&V^{(m)}_{\mbox{cont}}(X(q_1(s)),...,X(q_m(s)))ds+\sum_{n+1\leq l<M(\gam,T)}
\int_{t_l(\gam,T)}^{t_l(\gam,T)+\gam}\nonumber\\
&V^{(m-1)}_{\mbox{cont}}(X(q_1(s)),...,X(q_{m-1}(s)))ds\big)\big)\nonumber
\end{eqnarray}
where 
\[
J_{m,n}=E_x\big(\exp\big(\int_{t_n(\gam,T)}^{t_n(\gam,T)+\gam}
V^{(m)}_{\mbox{cont}}(X(q_1(s)),...,X(q_m(s)))ds\big)\big\vert\cF_{q_m(t_n
(\gam,T)-\gam^2)}\big).
\]

Let
\[
\tilde J_{m,n}=\exp\big(\int_{t_n(\gam,T)}^{t_n(\gam,T)+\gam}E_x\big(
V^{(m)}_{\mbox{cont}}(X(q_1(s)),...,X(q_m(s)))\big\vert\cF_{q_m(t_n
(\gam,T)-\gam^2)}\big)ds\big).
\]
Since $|e^\al-1-\al|\leq\al^2$ if $|\al|\leq 1$ then 
\begin{equation}\label{3.27}
|J_{m,n}-\tilde J_{m,n}|\leq 2\gam^2(C(V))^2.
\end{equation}
On the other hand, by the Markov property
\begin{eqnarray}\label{3.28}
&\tilde J_{m,n}=\exp\big(\int_{t_n(\gam,T)}^{t_n(\gam,T)+\gam}ds\int_M
p\big(q_m(s)-q_m(t_n(\gam,T)-\gam^2),\\
&X(q_m(t_n(\gam,T)-\gam^2)),y\big)V^{(m)}_{\mbox{cont}}(X(q_1(s)),...
,X(q_{m-1}(s)),y)d\nu(y)\big).\nonumber
\end{eqnarray}
Set
\[
d_\gam(t)=\inf_{s\geq\gam t}\min_{k+1\leq j\leq\ell}\min(q_j(s)-
q_{j-1}(s\gam^{-1}),\, q_j(s)-q_j(s-\gam^2))
\]
and observe that $d_\gam(t)\to\infty$ as $t\to\infty$ for each fixed $\gam>0$
in view of the assumption (\ref{2.18}). Now, by (\ref{3.23}) and (\ref{3.28}),
\begin{eqnarray}\label{3.29}
&\exp(-Ke^{-\ka d_\gam(T)}C(V)\gam)\leq J_{m,n}\exp\big(-\int_{t_n(\gam,T)}^
{t_n(\gam,T)+\gam}V^{(m-1)}_{\mbox{cont}}(X(q_1(s)),\\
&...,X(q_{m-1}(s)))ds\big)\leq\exp(Ke^{-\ka d_\gam(T)}C(V)\gam).\nonumber
\end{eqnarray}

Employing (\ref{3.26})--(\ref{3.29}) for $n=M(\gam,T),\, M(\gam,T)-1,...,1$
 with each $m=\ell,\ell-1,...,k+1$ we obtain that
 \begin{equation}\label{3.30}
 \limsup_{T\to\infty}\frac 1T\big\vert\ln\big( Z_x^{(\ell)}(0,M(\gam,T))\big)
 -\ln\big( Z_x^{(k)}(0,M(\gam,T))\big)\big|=0.
 \end{equation}
 Now taking $\ln$ in (\ref{3.25}) and letting first $T\to\infty$ and then
 $\gam\to 0$ we obtain from (\ref{3.30}) and the definition of $Z_x^{(k)}$
 that
 \begin{eqnarray}\label{3.31}
&\lim_{T\to\infty}\frac 1T\big(\ln E_x\exp\big(\int_0^TV(X(q_1(t)),...,
X(q_\ell(t)))dt\big)\\
& -\ln E_x\exp\big(\int_0^TV^{(k)}_{\mbox{cont}}
(X(\al_1t),...,X(\al_kt))dt\big)\big)=0.\nonumber
\end{eqnarray}
If $k=1$ then $\frac 1T$ of the second expression in brackets in (\ref{3.31})
converges as $T\to\infty$ to the logarithm of the spectral radius of the
semigroup of operators $R^t_{\mbox{cont}}(V)$ defined in (\ref{2.20}). Thus,
the assertions of Theorems \ref{thm2.3} and \ref{thm2.4} follow from the well
known results on large deviations (see \cite{DV1}, \cite{DV2}, \cite{Ki2} and
\cite{DZ}) in the same way as in the discrete time case.   \qed 

\subsection{Nonconventional averaging}\label{subsec3.4}

According to \cite{Fr} the large deviations  estimates (\ref{2.34})
and (\ref{2.35}) follow once we establish (\ref{2.33}) for all continuous
functions $W_t(x_1,...,x_\ell)$ on $\bbR_+\times M^\ell$. First, we claim
that even without the assumption $k=1$,
\begin{eqnarray}\label{3.32}
&\lim_{\ve\to 0}\ve\big(\ln E_x\exp\big(\ve^{-1}\int_0^TW_t(X(q_1(t/\ve)),
...,X(q_\ell(t/\ve)))dt\big)\\
&-\ln E_x\exp\big(\ve^{-1}\int_0^TW_t^{(k)}(X(q_1(t/\ve)),...,X(q_k(t/\ve)))
dt\big)\big)=0\nonumber
\end{eqnarray}
where in the discrete time case $q_j$'s are extended to all $s\geq 0$ by writing
$q_j(s)=q_j([s])$ and we set
\[
W^{(k)}_t(x_1,...,x_k)=\ln\int_M...\int_M\exp(W_t(x_1,...,x_\ell))
d\mu(x_{k+1})...d\mu(x_\ell)
\]
while in the continuous time case we set
\[
W^{(k)}_t(x_1,...,x_k)=\int_M...\int_MW_t(x_1,...,x_\ell)
d\mu(x_{k+1})...d\mu(x_\ell).
\]
The proof of (\ref{3.32}) is the same as the proofs of (\ref{3.1}) in the
discrete time case and of (\ref{3.31}) in the continuous time case while
the dependence of $W_t$ on $t$ does not play any role in the arguments
employed there.

Next, when $k=1$ we arrive at the "conventional" setup and (\ref{2.33}) 
follows in the same way as in \cite{Fr} (see also \cite{Ki3}). \qed

\section{Large deviations for any $k\geq 1$: i.i.d. case}\label{sec4}
\setcounter{equation}{0}

Here we assume that $X(n),\, n\geq 1$ are i.i.d. random variables (vectors)
and rely on the decomposition (\ref{2.36}). In view of independency of
$S_{N,a}(V)$ for different $a\in A_N$ we can write
\begin{equation}\label{4.1}
Z_N(V)=E\exp\big(\sum^N_{n=1}V(X(n),X(2n),...,X(kn))\big)=\prod_{a\in A_N}
Z_{N,a}(V)
\end{equation}
where 
\[
Z_{\eta,a}(V)=E\exp\big(\sum_{b\in B_\eta(a)}V(X(b),X(2b),...,X(kb))\big)
\]
with $A_N$ and $B_\eta(a)$ defined in Section \ref{sec2}.

In order to study $Z_{N,a}(V)$ we introduce also
\[
B(a)=\{ b\geq 1:\, b=ar_1^{d_1}r_2^{d_2}\cdots r_m^{d_m}\,\,\mbox{for some
nonnegative integers}\,\, d_1,...,d_m\}.
\]
Observe that each $l=1,2,...,k$ can be
written uniquely in the form $l=r_1^{d_1(l)}r_2^{d_2(l)}\cdots r_m^{d_m(l)}$
for some nonnegative integers $d_1(l),...,d_m(l)$. Now, if $b=ar_1^{d_1}\cdots
r_m^{d_m}\in B(a)$ and $l=1,2,...,k$ then $lb=ar_1^{d_1+d_1(l)}\cdots
r_m^{d_m+d_m(l)}\in B(a)$. Next, consider the lattice $\bbZ^m$ and set
\[
\bbZ^m_+=\{ n=(n_1,...,n_m),\, n_i\geq 0\,\,\mbox{for all}\,\, i=1,...,m\}.
\]
Then the formula $\vf_a(n_1,...,n_m)=ar_1^{n_1}\cdots r_m^{n_m}$ provides
a one-to-one correspondence 
\[
\vf_a:\,\bbZ^m_+\rightarrow B(a)
\]
where, recall, $a$ is relatively prime with $r_1,...,r_m$. Set
\[
D(\rho)=\{ n=(n_1,...,n_m)\in\bbZ^m:\, n_i\geq 0,\, i=1,...,m\,\,\mbox{and}
\,\,\sum_{i=1}^mn_i\ln r_i\leq\rho\}.
\]
Then, clearly, 
\begin{equation}\label{4.2}
\vf_aD(\ln(N/a))=B_N(a).
\end{equation}
It follows that
\begin{equation}\label{4.3}
|B_N(a)|\leq\prod_{i=1}^m\big(1+\frac 1{\ln r_i}\ln\frac Na\big)\leq
\big(1+\frac 1{\ln 2}\ln\frac Na\big)^m
\end{equation}
where $|\Gam|$ denotes the cardinality of a set $\Gam$. Hence
\begin{equation}\label{4.4}
a\leq N2^{-(|B_N(a)|^{1/m}-1)}.
\end{equation}

Next, we claim that $Z_{N,a}(N)$ is determined only by $|B_N(a)|$ and not by
$N$ and $a$ themselves. Indeed, since $|D(\rho)|$ is nondecreasing in $\rho$
then it determines the set $D(\rho)$ itself, and so $|D(\ln(N/a))|=|B_N(a)|
=|B_{N/a}(1)|$ determines the set $B_{N/a}(1)$ in view of (\ref{4.2}). Set
$\hat {B}_\eta(a)=B_\eta(a)\cup\{ n:\,n=ln'$ for some $n'\in B_\eta(a)$ and 
$l=2,3,...,k\}$. Then we can write
\begin{equation*}
Z_{\eta,a}(V)=\int...\int\exp\big(\sum_{b\in B_\eta(a)}V(x_b,x_{2b},...,x_{kb})
\big)\prod_{b'\in\hat B_\eta(a)}d\mu(x_{b'}).
\end{equation*}
It is easy to see from here that $Z_{\eta,a}(V)=Z_{\eta/a,1}(V)$ for any
$\eta>0$ and an integer $a\geq 2$ relatively prime with $r_1,...,r_m$. 
Indeed, $Z_{\eta,a}(V)$ is determined by the labeled directed graph 
$\Gam_\eta(a)$ having $B_\eta(a)$ as its vertices and having arrows of
$k-1$ types so that an arrow with a label $l=2,3,...,k$ is drawn from 
$n\in B_\eta(a)$ to $n'\in B_\eta(a)$ if $n'=ln$. Clearly, the graphs
$\Gam_\eta(a)$ and $\Gam_{\eta/a}(1)$ are isomorphic in the sense that there
exists a one-to-one map $\vf:\, B_\eta(a)\to B_{\eta/a}(1)$ such that if
$n,n'\in B_\eta(a)$ and $n'=ln$ then $\vf n,\vf n'\in B_{\eta/a}(1)$ and
$\vf n'=l\vf n$. Since $X(n),\, n\geq 1$ are i.i.d., $Z_{\eta,a}(V)$ is 
determined, in fact, by the isomorphism class of $\Gam_\eta(a)$ and not
by  $\Gam_\eta(a)$ itself, and so $Z_{\eta,a}(V)=Z_{\eta/a,1}(V)$. Since
$|B_N(a)|$ determines the set $B_{N/a}(1)$ we conclude that it determines
$Z_{N,a}(V)$, as well, proving the claim.

Let $l=|B_N(a)|$ and set $R_l(V)=Z_{N,a}(V)$ since the latter depends only
on $l$ (and, of course, on $V$). Observe that
\begin{equation}\label{4.5}
\ln R_l(V)\leq lC(V)
\end{equation}
where $C(V)=\sup_{x_1,...,x_k\in M}|V(x_1,...,x_k)|$. Set $A^{(l)}_N=
\{ a\in A_N:\, |B_N(a)|=l\}$. By (\ref{4.4}),
\begin{equation}\label{4.6}
|A_N^{(l)}|\leq N2^{-(l^{1/m}-1)}.
\end{equation}
Observe that $|D(\rho)|$ is a nondecreasing right continuous piecewise
constant function and since $r_1,r_2,...,r_m$ are primes the jumps of
$|D_N(\rho)|$ can only be of size 1, i.e. for all $\tilde\rho>0$,
\[
|D(\tilde\rho)|-\lim_{\rho\uparrow\tilde\rho}|D(\rho)|\leq 1.
\]
It follows that 
\[
\rho_{\min}(l)=\inf\{\rho\geq 0:\, |D(\rho)|=l\}\,\,\mbox{and}\,\,
\rho_{\max}(l)=\sup\{\rho\geq 0:\, |D(\rho)|=l\}
\]
are well defined for each integer $l\geq 1$ and $\rho_{\min}(l)<\rho_{\max}(l)$.
Denote $\hat {A}_N^{(l)}=\{ a\in\bbN:\, Ne^{-\rho_{\max}(l)}\leq a\leq
Ne^{-\rho_{\min}(l)},\, a\,$ is relatively prime with $\,r_1,r_2,...,r_m\}$.
 Then by (\ref{4.2}) and the above,
\begin{equation}\label{4.7}
\frac 1N\big\vert |A_N^{(l)}|-|\hat {A}_N^{(l)}|\big\vert\leq\frac 1N\to 0
\,\,\mbox{as}\,\, N\to\infty.
\end{equation}

We will show next that the limit
\begin{equation}\label{4.8}
\lim_{N\to\infty}\frac 1N|\hat {A}_N^{(l)}|=(e^{-\rho_{\min}(l)}-
e^{-\rho_{\max}(l)})r
\end{equation}
exists with 
\begin{equation}\label{4.9}
r=1-\frac 12-\frac 13+\frac 1{2\cdot 3}-\frac 15+\frac 1{2\cdot 5}+
\frac 1{3\cdot 5}-\frac 1{2\cdot 3\cdot 5}+\cdots +(-1)^m\frac 1{r_1\cdot
r_2\cdots r_m}.
\end{equation}
Indeed, for each integer $n\geq 1$ set $G(n)=\{ in:\, i\in\bbZ_+\}$ and
$G^{(l)}_N(n)=\{ j\in G(n):\, Ne^{-\rho_{\max}(l)}\leq j\leq
Ne^{-\rho_{\min}(l)}\}$. Then (by the inclusion-exclusion principle),
\begin{equation}\label{4.10}
|\hat A_N^{(l)}|=|G_N^{(l)}(1)|-|G_N^{(l)}(2)|-|G_N^{(l)}(3)|+
|G_N^{(l)}(2\cdot 3)|+\cdots +(-1)^m|G_N^{(l)}(r_1\cdot r_2\cdots r_m)|.
\end{equation}
Since each $G(n)$ is an arithmetic progression with the difference $n$ we
obtain that
\begin{equation}\label{4.11}
\lim_{N\to\infty}\frac 1N|G_N^{(l)}(n)|=\frac 1n(e^{-\rho_{\min}(l)}-
e^{-\rho_{\max}(l)})
\end{equation}
and (\ref{4.8})--(\ref{4.9}) follows from (\ref{4.10})--(\ref{4.11}).

Observe that by (\ref{4.2}), (\ref{4.3}) and the definition of $\rho_{\min}$
and $\rho_{\max}$,
\begin{equation}\label{4.11+}
\rho_{\max}(l)\geq\rho_{\min}(l)\geq (l^{1/m}-1)\ln 2,
\end{equation}
and so we obtain from (\ref{4.1}) and (\ref{4.5})--(\ref{4.9}) that
\begin{eqnarray}\label{4.12}
&\frac 1N\ln Z_N(V)=\frac 1N\sum_{a\in A_N}\ln Z_{N,a}(V)\\
&=\frac 1N\sum_{1\leq l\leq (1+\frac 1{\ln 2}\ln\frac Na)^m}|A^{(l)}_N|
\ln R_l(V)\nonumber\\
&\longrightarrow r\sum_{l=1}^\infty(e^{-\rho_{\min}(l)}-
e^{-\rho_{\max}(l)})\ln R_l(V)\,\,\mbox{as}\,\, N\to\infty\nonumber
\end{eqnarray}
while the last series converges absolutely in view of (\ref{4.5}) and 
(\ref{4.11+}).
Furthermore, if $V=V_\la$ depends on a parameter $\la$ in a differentiable 
way with a derivative bounded by $\tilde C$ then each $\ln R_l(V_\la)$ is 
also differentiable in $\la$ with a derivative bounded by $\tilde Cl$. Hence,
in this case we can differentiate in $\la$ the
series in the right hand side of (\ref{4.12}) and the assertion of Theorem
\ref{thm2.6} follows. \qed 

\begin{remark}\label{rem4.1}
Arguments of the present section yield also moderate deviations estimates
for sums $S_N(V)$ given by (\ref{2.36}) in the above i.i.d. setup. Namely,
let $\bar V=\int V(x_1,x_2,...,x_k)d\mu(x_1)d\mu(x_2)\cdots d\mu(x_k)$,
where $\mu$ is the probability distribution of $X(1)$, and observe that 
$\bar V=EV(X(n),X(2n),...,X(kn))$ for any $n\geq 1$. Then for any $\ka\in
(0,\frac 12)$,
\begin{equation}\label{4.14}
\limsup_{N\to\infty}N^{2\ka-1}\ln P\{ N^{\ka-1}S_N(V-\bar V)\in K\}\leq
-\frac 12\La\inf_{u\in K}u^2
\end{equation}
for any closed set $K\subset\bbR$ and
\begin{equation}\label{4.15}
\liminf_{N\to\infty}N^{2\ka-1}\ln P\{ N^{\ka-1}S_N(V-\bar V)\in U\}\geq
-\frac 12\La\inf_{u\in U}u^2
\end{equation}
for any open set $U\subset\bbR$ provided that for any $\la\in\bbR$,
\begin{equation}\label{4.16}
\lim_{N\to\infty}N^{2\ka-1}\ln E\exp(\la N^{-\ka}S_N(V-\bar V))=\frac 12
\La^{-1}\la^2
\end{equation}
(cf. \cite{Fr} and \cite{DZ}). In order to compute the limit (\ref{4.16})
we observe relying on the same arguments as above that $\up_l(V)=
E(S_{N,a}(V-\bar V))^2$ depends only on $l=|B_N(a)|$ and on $V$ where, recall,
$S_{N,a}$ was defined in (\ref{2.36}). It follows that
\begin{equation}\label{4.17}
\ln Z_{N,a}(\la N^{-\ka}(V-\bar V))=\frac 12\la^2N^{-2\ka}\up_l(V)+
O(|\la|^3N^{-3\ka}\| V\|^3l^3)
\end{equation}
provided $|B_N(a)|=l$. Then in the same way as in (\ref{4.12}),
\begin{eqnarray}\label{4.18}
&\lim_{N\to\infty}N^{2\ka-1}\ln Z_N(\la N^{-\ka}(V-\bar V))\\
&=\frac 12\la^2\lim_{N\to\infty}N^{-1}\sum_{1\leq l\leq(1+\frac 1{\ln 2}
\ln\frac Na)^m}|A_N^{(l)}|\up_l(V)\nonumber\\
&=\frac 12\la^2r\sum_{l=1}^\infty
(e^{-\rho_{\min}(l)}-e^{-\rho_{\max}(l)})\up_l(V)\nonumber
\end{eqnarray}
and (\ref{4.16}) follows under a nondegeneracy condition $\up_l(V)\ne 0$
whenever $A_N^{(l)}\ne\emptyset$.
\end{remark}

\section{Nonconventional large deviations for dynamical systems}\label{sec5}
\setcounter{equation}{0}

In this section we discuss nonconventional large deviations results in the
dynamical systems case and 
a reader which is not familiar with hyperbolic dynamical systems and is
interested only in the probabilistic setup may skip this section altogether.
We assume now that $T:\, M\to M$ is either a subshift of finite
type or a $C^2$ expanding endomorphism or a hyperbolic diffeomorphism on
a compact Riemannian manifold (see \cite{Bo} and \cite{KH}). By the latter
we mean a $C^2$ Anosov diffeomorphism or, more generally, a $C^2$
diffeomorphism defined in a neighborhood of a hyperbolic attractor. We
identify now the probability space $(\Om,\cF,P)$ with $(M,\cB,\mu)$ where
$\cB$ is the Borel $\sig$-algebra on $M$ and $\mu$ is a Gibbs $T$-invariant
measure constructed by a H\" older continuous potential $g$ (see \cite{Bo}
and \cite{KH}). Let $k=1$ in (\ref{2.3}), (\ref{2.4}) and (\ref{2.17}),
(\ref{2.18}).

\begin{theorem}\label{thm5.1} Let $X(n)=X(n,\om)=X(n,x)=f(T^nx),\, n\geq 0$,
where $f$ is a H\" older continuous (vector) function, and we take also
 $q_j$'s as in Theorem \ref{thm2.1}. Let $k=1$ then for any 
$W_\la=W_\la(x_1,...,x_\ell)$ continuous in $x_1,...,x_\ell$,
\begin{eqnarray}\label{5.1}
&Q(W_\la)=\lim_{N\to\infty}\frac 1N\ln\int_M\exp\big(\sum_{n=1}^N
W_\la(T^{q_1(n)}x,...,T^{q_\ell(n)}x)\big)d\mu(x)\\
&=\gP (\ln\hat {W}_\la+g)\nonumber
\end{eqnarray}
with $\hat W$ defined by (\ref{2.5}), $g$ being the potential of $\mu$ and
$\gP (\cdot)$ being the topological pressure of a function in brackets for
the transformation $T$ (see \cite{Bo} and \cite{KH}). If the derivative
$dW_\la/d\la$ exists and is bounded in $x_1,...,x_\ell$ for each $\la$ then
$Q(W_\la)$ is differentiable in $\la$, as well. In the expanding and hyperbolic
cases the limit in (\ref{5.1}) remains the same if we integrate in (\ref{5.1})
either with respect to the normalized Riemannian volume or with respect to the
 Sinai-Ruelle-Bowen (SRB) measure $\mu=\SRB$ which is the Gibbs measure 
corresponding to the potential
$g=-\ln\vf$ where $\vf$ is the Jacobian of the differential $DT$ restricted
to unstable leaves (see \cite{Bo} and \cite{KH}). The large deviations estimates 
(\ref{2.9}) and (\ref{2.10}) hold true with the rate functional $J$ given by 
(\ref{2.8}) with $Q(W_\la)$ for $W_\la=\la F$ given by (\ref{5.1}) in place of
 $r(W_\la)$ in (\ref{2.8}).
  \end{theorem}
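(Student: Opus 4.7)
The proof combines the reduction to $k=1$ of Subsection \ref{subsec3.1} with Ruelle-Bowen thermodynamic formalism. Fix a Markov partition $\cP$ of $M$ (trivial for subshifts, a finite Markov partition for $C^2$ expanding maps, a Bowen-Sinai partition in the hyperbolic case) and set $\cF_{m,l}=\sigma(T^{-j}\cP:m\leq j\leq l)$. Exponential $\psi$-mixing $\psi_{\mu,\mu}(n)\leq Ke^{-\ka n}$ of a Gibbs measure with respect to this filtration is classical, and H\"older continuity of $f$ together with uniform contraction along local stable/unstable sets yields $\|f(T^j x)-E_\mu(f(T^j x)\,|\,\cF_{j-n,j+n})\|_\infty\leq Ke^{-\ka n}$, so the approximation coefficient $\be_V(n)$ decays exponentially for any continuous $V$. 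Proposition \ref{prop3.1} applied with $P=\Pi=\mu$ therefore reduces $Q(W_\la)$ to
\begin{equation*}
Q(W_\la)=\lim_{N\to\infty}\frac{1}{N}\log\int_M \exp\Bigl(\sum_{n=1}^N \log\hat W_\la(T^n x)\Bigr)d\mu(x).
\end{equation*}

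Next I would identify this single-variable limit with a topological pressure. The Ruelle-Bowen formula for Gibbs states asserts that for any continuous $h$ on $M$,
\begin{equation*}
\lim_{N\to\infty}\frac{1}{N}\log\int_M \exp\Bigl(\sum_{n=1}^N h(T^n x)\Bigr)d\mu(x)=\gP(h+g)-\gP(g),
\end{equation*}
which reduces to $\gP(h+g)$ under the standard normalization $\gP(g)=0$ that makes $\mu$ a probability measure. Taking $h=\log\hat W_\la$ gives the asserted identity $Q(W_\la)=\gP(\log\hat W_\la+g)$. In the expanding and hyperbolic cases the same limit arises when one integrates against the Riemannian volume $m$ or against $\SRB$: the SRB measure is itself the Gibbs state for $g=-\log\vf$, and the Radon-Nikodym densities between $m$, $\SRB$ and $\mu$ are bounded above and below on the attractor (using the volume lemma along unstable leaves for $m$ versus $\SRB$), so all three generating functionals share the same $\tfrac{1}{N}\log$-limit.

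Convexity of $Q(\la F)$ in $\la$ is automatic from finite-$N$ H\"older inequalities, and differentiability follows from the Ruelle-Pollicott spectral gap of the transfer operator on a Banach space of H\"older functions, which gives real analyticity of $\la\mapsto\gP(\log\hat W_\la+g)$ whenever $F$ is H\"older; the boundedness of $\partial_\la W_\la$ in $x_1,\ldots,x_\ell$ lets one differentiate under the pressure by the envelope theorem. With $Q(\la F)$ convex and differentiable, the G\"artner-Ellis theorem converts the exponential moment limit (\ref{5.1}) into the large-deviations upper and lower bounds (\ref{2.9}) and (\ref{2.10}) with rate functional $J$ defined by (\ref{2.8}). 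The main obstacle is in the first paragraph: propagating the H\"older regularity of $f$ through the symbolic conjugacy uniformly across the three dynamical classes, especially near the boundaries of Markov partition elements where the coding map is only H\"older rather than smooth. Once this mixing input is secured, everything else is an essentially formal consequence of the existing thermodynamic formalism.
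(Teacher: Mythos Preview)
Your overall strategy matches the paper's: set up the filtration via a Markov partition, verify the mixing and approximation hypotheses of Proposition~\ref{prop3.1}, reduce to the conventional one-variable limit, and identify that limit as a topological pressure via standard thermodynamic formalism (the paper cites \cite{Ki2} here), with differentiability from the Ruelle--Pollicott theory. For the Gibbs-measure part of the statement your argument is essentially the paper's.

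There is, however, a genuine gap in your treatment of the Riemannian-volume case for hyperbolic diffeomorphisms. You assert that the Radon--Nikodym densities between the normalized Riemannian volume $m$ and $\SRB$ are bounded above and below on the attractor, invoking the volume lemma. This is false in general: for a non-volume-preserving Anosov diffeomorphism the SRB measure is mutually singular with respect to $m$, so no such density exists at all. The volume lemma controls the Riemannian measure of Bowen balls, which is a different statement; it does not produce absolute continuity of $m$ with respect to $\SRB$ on the full $\sigma$-algebra. (For expanding endomorphisms your claim happens to be true, since there $\SRB$ is the absolutely continuous invariant measure, but that does not cover the hyperbolic case.)

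The paper handles this point differently and this is precisely why the \emph{modified} mixing coefficient $\psi_{P,\Pi}$ with two distinct measures was introduced in Subsection~\ref{subsec3.1}. One takes $P$ to be the normalized Riemannian volume and $\Pi=\SRB$; the known exponential convergence of $T^n_*m$ toward $\SRB$ (equivalently, exponential decay of correlations between H\"older observables with one of the measures replaced by volume) gives $\psi_{P,\Pi}(n)\to 0$. Proposition~\ref{prop3.1} then applies directly with this pair, and the reduced one-variable limit with integration against $m$ again equals $\gP(\ln\hat W_\la+g)$ with $g=-\ln\vf$. So the fix is not to compare measures via densities but to feed the two-measure mixing hypothesis of Proposition~\ref{prop3.1}; once you do that, the rest of your write-up goes through. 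A minor side remark: for merely continuous $W_\la$ you only get $\be_{W_\la}(n)\to 0$, not exponential decay, but that is all Proposition~\ref{prop3.1} requires.
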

  
\begin{proof} For $T$ being a $C^2$ Axiom A diffeomorphism (in
particular, Anosov) in a neighborhood of an attractor or $T$ being
an expanding $C^2$ endomorphism of a Riemannian manifold $M$ (see
\cite{Bo}) let $\zeta$ be a finite Markov partition
for $T$. Then we can take $\cF_{kl}$ to be the finite $\sig$-algebra
generated by the partition $\cap_{i=k}^lT^i\zeta$. Another case for the above
theorem is when $T$ is a topologically mixing subshift of finite type, i.e.
 $T$ is the left shift on a subspace $\Xi$ of the space of one-sided
sequences $\vs=(\vs_i,i\geq 0), \vs_i=1,...,l_0$ such that $\vs\in\Xi$
if $\xi_{\vs_i\vs_{i+1}}=1$ for all $i\geq 0$ where $\Xi=(\xi_{ij})$ 
is an $l_0\times l_0$ matrix with $0$ and $1$ entries and such that $\Xi^n$ 
for some $n$
is a matrix with positive entries. Again, we take $\mu$ to be a Gibbs
invariant measure corresponding to some H\" older continuous function 
and to define $\cF_{kl}$ as the finite $\sig$-algebra generated by cylinder
sets with fixed coordinates having numbers from $k$ to $l$. The
exponentially fast $\psi$-mixing is well known in the above cases (see 
\cite{Bo}). In fact, convergence to zero of the modified $\psi-$mixing
coefficient $\psi_{P,\Pi}(n)$ holds true, as well, in the hyperbolic and
expanding case when $P$ is the normalized Riemannian volume and $\Pi=\SRB$.

If the function $W_\la=W_\la(x_1,...,x_\ell)$ is continuous
in $x_1,...,x_\ell$ then $\be_{W_\la}(n)$ from Proposition \ref{prop3.1}
tends to zero as $n\to\infty$, and so the condition (\ref{3.1}) will be 
satisfied here. It follows from \cite{Ki2} that
\[
\lim_{N\to\infty}\frac 1N\ln\int\exp\big(\sum_{n=1}^N\hat {W}_\la(f(T^nx))
\big)d\mu(x)=\gP (\ln\hat {W}_\la+g)
\]
and Theorem \ref{thm5.1} follows from Proposition 
\ref{prop3.1} and Corollary \ref{cor3.2} considered with $k=1$ since
in our circumstances  differentiability of the topological pressure in 
parameters of the potential is well known (see, for instance, \cite{Ru}
and \cite{PP}).
\end{proof}

\begin{remark}\label{rem5.2} (i) A version of Theorem \ref{thm2.2} can 
also be obtained in the present dynamical systems setup where the limit
$Q(W)=\gP (\ln\hat {W}(x)+g)$ is obtained in the same way as in Theorem 
\ref{thm5.1}. Since $\gP (q)$ is Gateaux differentiable at any H\" older
continuous $q$ (see \cite{Wa} and \cite{PP}) then $Q(W)$ is also Gateaux
differentiable at any H\" older continuous $W$ and the large deviations
for occupational measures 
\[
\zeta_N=\zeta_{N,x}=\frac 1N\sum_{n=1}^N\del_{\big(T^{q_1(n)}x,...,
T^{q_\ell(n)}x\big)}
\]
follow from Section 4.5.3 in \cite{DZ} with a rate function which is the
Fenchel--Legendre transform of $Q$.

(ii) Theorem \ref{2.6} provides a direct application to the dynamical systems
case when $T$ is a full shift (on a finite alphabet sequence space) considered
with a Bernoulli invariant measure taking $X(n)=f\circ T^n$ with
a function $f$ on the sequence space depending only on zero coordinate.
Nonconventional large deviations when $k>1$ for more general
cases (e.g. subshifts of finite type with Gibbs invariant measures, hyperbolic
and expanding transformations etc.) require more elaborate technique and they
will not be treated in this paper. 
\end{remark}

\end{document}